\documentclass[11pt]{amsart}

\usepackage{amsmath,amssymb,amsthm,mathtools}
\usepackage{mathrsfs}
\usepackage{enumitem}
\usepackage{microtype}
\usepackage{url}
\usepackage[dvipsnames,svgnames,x11names]{xcolor}
\usepackage[colorlinks=true,
            linkcolor=blue!70!black,
            citecolor=green!50!black,
            urlcolor=blue!60!black]{hyperref}

\newtheorem{theorem}{Theorem}[section]
\newtheorem{proposition}[theorem]{Proposition}
\newtheorem{lemma}[theorem]{Lemma}

\theoremstyle{definition}

\newtheorem{assumption}[theorem]{Assumption}
\newtheorem{example}[theorem]{Example}

\theoremstyle{remark}
\newtheorem{remark}[theorem]{Remark}

\newcommand{\Td}{\mathbb{T}^d}
\newcommand{\To}{\mathbb{T}^1}
\newcommand{\Zd}{\mathbb{Z}^d}
\newcommand{\R}{\mathbb{R}}

\newcommand{\E}{\mathbb{E}}

\newcommand{\cF}{\mathcal{F}}

\newcommand{\cL}{\mathcal{L}}

\newcommand{\cC}{\mathcal{C}}

\newcommand{\Z}{\mathbb{Z}}
\newcommand{\Kh}{\widehat{K}}

\newcommand{\Wt}{\mathsf{W}_2}

\newcommand{\phiT}{\phi^T}
\newcommand{\mT}{m^T}
\newcommand{\bphi}{\bar{\phi}}
\newcommand{\barm}{\bar{m}}
\newcommand{\blam}{\bar{\lambda}}

\newcommand{\gamc}{\gamma_c}

\newcommand{\Div}{\operatorname{div}}
\newcommand{\grad}{\nabla}

\newcommand{\eps}{\varepsilon}

\allowdisplaybreaks

\begin{document}

\title[Phase Transitions in Turnpike Theory for MFG]{Phase transitions in turnpike theory for mean-field games}

\author{[Siddharth Karuturi]}
\address{[Illinois Mathematics and Science Academy]}
\email{[skaruturi@imsa.edu]}
\date{}

\keywords{mean-field games, turnpike property, phase transition, Fourier spectral gap, bifurcation, propagation of chaos, Turing instability}
\subjclass[2020]{49N80, 35Q89, 91A16, 35B32, 35B36}

\begin{abstract}
We study a translation-invariant mean-field game on the flat torus with interaction $F(x,m)=\gamma (K*m)(x)$, where $K$ is smooth, even, and mean-zero. The interaction is of potential type, arising as the first variation of a quadratic energy, though the stationary system is not treated variationally.

Linearizing around the uniform equilibrium yields mode-wise $2\times 2$ systems with dispersion $\sigma_\xi(\gamma)=\nu^2(2\pi|\xi|)^4+\gamma(2\pi|\xi|)^2\hat K(\xi)$. If $\hat K$ is negative for some mode, a finite threshold
\[
\gamma_c=\min_{\hat K(\xi)<0}\frac{\nu^2(2\pi|\xi|)^2}{|\hat K(\xi)|}
\]
marks loss of stability; otherwise $\gamma_c=+\infty$. Near criticality, the spectral gap scales as $\rho(\gamma)\sim C_*\sqrt{\gamma_c-\gamma}$.

For $\gamma<\gamma_c$, the uniform state is exponentially stable in the turnpike sense for finite-horizon problems, with rate $\rho(\gamma)$. At $\gamma=\gamma_c$, the gap closes and, after phase fixing and center-manifold reduction, one obtains algebraic midpoint decay of order $T^{-1/2}$. For $\gamma>\gamma_c$, a branch of nonuniform stationary solutions bifurcates via a pitchfork-type amplitude equation, with translations generating the full family.

Finally, under standard asymptotic-consistency assumptions on symmetric $N$-player equilibria in the subcritical regime, we obtain qualitative propagation of chaos, without quantitative rates.
\end{abstract}

\maketitle

\section{Introduction}
\label{sec:intro}

\subsection{Background and motivation}
\label{sub:background}

The \emph{turnpike phenomenon} asserts that, on a long time horizon $[0,T]$, an optimal trajectory spends most of its time near a stationary optimal state, approaching it from both endpoints through thin boundary layers. This idea originated in economics \cite{DS58} and has since been developed for finite-dimensional optimal control \cite{PZ13,TZ15}, infinite-dimensional systems \cite{T23}, and mean-field games \cite{CaP19,CDLL19}.

In mean-field game theory, turnpike results are often proved under the Lasry--Lions monotonicity condition, introduced independently by Lasry--Lions \cite{LL07} and Huang, Malham\'e, and Caines \cite{HMC06}. That condition enforces a sign structure on the coupling and excludes many pattern-forming mechanisms. The present paper isolates a complementary mechanism in a translation-invariant setting: the long-time picture is determined by the Fourier spectrum of the interaction kernel $K$.

The mechanism is analogous to Turing's diffusion-driven instability \cite{T52}. In the present MFG setting, the diffusion coefficient $\nu$ and the coupling strength $\gamma$ compete mode by mode. As long as the spectral gap stays positive, the uniform equilibrium remains the dominant long-time state. When the first Fourier mode reaches the threshold, the linear gap closes, and a pattern-forming bifurcation becomes possible.

\subsection{Relation to existing work}
\label{sub:related}

Cardaliaguet and Porretta \cite{CaP19} proved exponential turnpike for MFG under strong monotonicity using the master equation. The monograph \cite{CDLL19} establishes convergence of $N$-player equilibria under similar structural assumptions. Tr\'elat \cite{T23} gave a linear turnpike theorem for optimal control systems in which the long-time rate is controlled by a spectral gap; the earlier work of Porretta--Zuazua \cite{PZ13} studied long-time versus steady-state optimal control in a related setting. The present paper connects that spectral-gap viewpoint to the Fourier structure of the MFG interaction kernel.

The bifurcation of nonuniform stationary states in mean-field models has been studied in the context of McKean--Vlasov equations \cite{DG87} and aggregation-diffusion systems \cite{CGPS20}. The Lyapunov--Schmidt and Crandall--Rabinowitz approach \cite{CR71,K04} used here is standard; the point here is to identify the exact spectral condition that drives the transition and to connect it to the turnpike rate.

Propagation of chaos for mean-field games was established by Sznitman \cite{S91} and further developed by Lacker \cite{L23} and Cecchin--Delarue \cite{CD22}. Quantitative Wasserstein rates for empirical measures depend on dimension \cite{FG15}, which is why we restrict to a qualitative convergence statement.

\subsection{Standing assumptions and regimes}
\label{sub:assumptions}

We work on the flat torus $\Td=(\R/\Z)^d$, fix $\nu>0$, and consider a smooth even kernel $K\in C^\infty(\Td)$ with zero mean. The general model allows the possibility that
\[
\Kh(\xi)\ge 0 \qquad \text{for all }\xi\neq 0,
\]
in which case no phase transition occurs. If some negative Fourier coefficient exists, then the critical coupling strength is defined by
\begin{equation}
\label{eq:gamc-def}
\gamc :=
\min_{\substack{\xi\in\Zd\setminus\{0\}\\ \Kh(\xi)<0}}
\frac{\nu^2(2\pi|\xi|)^2}{|\Kh(\xi)|},
\end{equation}
with the convention $\gamc=+\infty$ if no $\xi\in\Zd\setminus\{0\}$ satisfies $\Kh(\xi)<0$.

The results in this paper fall into three regimes:
\[
\begin{aligned}
0\le \gamma < \gamc &\quad \text{(subcritical turnpike)},\\
\gamma=\gamc &\quad \text{(critical regime)},\\
\gamma>\gamc &\quad \text{(supercritical bifurcation)}.
\end{aligned}
\]

For the critical and supercritical statements, we impose the following nondegeneracy assumptions.

\begin{assumption}[Critical-mode hypothesis]
\label{ass:critical-mode}
Assume that $\gamc<\infty$ and that the critical set
\[
\cC :=
\Bigl\{\xi\in\Zd\setminus\{0\}:\Kh(\xi)<0,\;
\frac{\nu^2(2\pi|\xi|)^2}{|\Kh(\xi)|}=\gamc\Bigr\}
\]
consists of a single pair $\{\pm \xi_0\}$ for some $\xi_0\in\Zd\setminus\{0\}$.
\end{assumption}

\begin{assumption}[Small boundary data]
\label{ass:small-data}
The terminal cost $g\in C^\infty(\Td)$ satisfies
\[
\|g-\bar g\|_{C^{2,\alpha}(\Td)}\le \eps
\]
for some small $\eps>0$, where $\bar g := \int_{\Td}g\,dx$.
The initial density $m_0\in C^\infty(\Td)$ is strictly positive, satisfies
\[
\int_{\Td}m_0\,dx=1,
\qquad
\|m_0-1\|_{C^{2,\alpha}(\Td)}\le \eps.
\]
\end{assumption}

\begin{assumption}[Critical cubic reduction]
\label{ass:critical-cubic}
At $\gamma=\gamc$, after fixing the translation phase so that the critical center space is one-dimensional, the reduced critical dynamics admit an odd scalar normal form
\[
\dot a = -\beta a^3 + O(a^5)
\]
for some $\beta>0$.
\end{assumption}

\begin{assumption}[Bifurcation nondegeneracy]
\label{ass:bif-nondeg}
In the phase-fixed stationary problem at $\gamma=\gamc$, the reduced scalar bifurcation equation has the form
\[
\alpha(\gamma-\gamc)A-\beta A^3 + R(A,\gamma)=0,
\]
where $\alpha>0$, $\beta>0$, and the remainder satisfies
\[
|R(A,\gamma)|\le C\Bigl(|\gamma-\gamc|^2|A| + |\gamma-\gamc|\,|A|^3 + |A|^5\Bigr)
\]
for $(A,\gamma)$ near $(0,\gamc)$.
\end{assumption}

\subsection{Main results}
\label{sub:main-results}

The finite-horizon mean-field game system is
\begin{equation}
\label{eq:MFG}
\tag{$\mathrm{MFG}_T$}
\begin{cases}
-\partial_t \phi - \nu\Delta \phi + \dfrac12|\grad\phi|^2 = \gamma(K*m)(x), \\[4pt]
\partial_t m - \nu\Delta m - \Div(m\,\grad\phi)=0, \\[4pt]
m(0,\cdot)=m_0,\qquad \phi(T,\cdot)=g.
\end{cases}
\end{equation}
The stationary (ergodic) system is
\begin{equation}
\label{eq:SMFG}
\tag{$\mathrm{SMFG}$}
\begin{cases}
-\nu\Delta \bphi + \dfrac12|\grad\bphi|^2 = \gamma(K*\barm)(x)-\blam, \\[4pt]
-\nu\Delta \barm - \Div(\barm\,\grad\bphi)=0, \\[4pt]
\displaystyle \int_{\Td}\barm\,dx=1,\qquad \displaystyle \int_{\Td}\bphi\,dx=0.
\end{cases}
\end{equation}
The uniform equilibrium is $(\bphi,\barm,\blam)=(0,1,0)$.

For $\xi\in\Zd\setminus\{0\}$, set $k_\xi:=(2\pi|\xi|)^2$ and define
\begin{equation}
\label{eq:gap-xi}
\sigma_\xi(\gamma) := \nu^2 k_\xi^2+\gamma k_\xi \Kh(\xi).
\end{equation}
For $\gamma<\gamc$ (or for all $\gamma\ge 0$ if $\gamc=+\infty$), one has
$\sigma_\xi(\gamma)>0$ for every $\xi\neq 0$, and we define the spectral gap
\begin{equation}
\label{eq:global-gap}
\rho(\gamma):=\min_{\xi\neq 0}\sqrt{\sigma_\xi(\gamma)}.
\end{equation}

\begin{theorem}[Linear spectral threshold]
\label{thm:spectral-threshold}
Assume the model above. For every $\xi\neq 0$, the Fourier mode matrix $M_\xi$ satisfies
\[
M_\xi^2=\sigma_\xi(\gamma) I.
\]
Hence:
\begin{enumerate}[label=\textup{(\roman*)},leftmargin=2em]
\item if $\sigma_\xi(\gamma)>0$, the eigenvalues of $M_\xi$ are $\pm\sqrt{\sigma_\xi(\gamma)}$;
\item if $\sigma_\xi(\gamma)=0$, then $0$ is a double eigenvalue and $M_\xi$ is nilpotent of index $2$;
\item if $\sigma_\xi(\gamma)<0$, the eigenvalues are purely imaginary $\pm i\sqrt{|\sigma_\xi(\gamma)|}$.
\end{enumerate}
The following are equivalent:
\begin{enumerate}[label=\textup{(\roman*)},leftmargin=2em]
\item $\sigma_\xi(\gamma)>0$ for all $\xi\neq 0$;
\item $\gamma<\gamc$;
\item the linearization around the uniform equilibrium has no neutral Fourier mode.
\end{enumerate}
Under Assumption~\ref{ass:critical-mode}, as $\gamma\uparrow\gamc$,
\[
\rho(\gamma)
=
C_\ast\sqrt{\gamc-\gamma}+o\!\bigl(\sqrt{\gamc-\gamma}\bigr),
\qquad
C_\ast = \sqrt{k_{\xi_0}\,|\Kh(\xi_0)|}.
\]
\end{theorem}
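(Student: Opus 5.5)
We prove the theorem by linearizing \eqref{eq:MFG} around the uniform state, computing the $2\times 2$ mode matrix $M_\xi$ explicitly, and applying Cayley--Hamilton; the threshold and gap statements then reduce to elementary facts about the scalars $\sigma_\xi(\gamma)$. Throughout we take $\gamma\ge 0$.

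\textbf{Step 1: the mode matrix.} Write $\phi=\eps\psi$ and $m=1+\eps\mu$, and keep first-order terms. The quadratic Hamiltonian term drops out, and $\Div(m\grad\phi)$ becomes $\Delta\psi$. This gives
\[
-\partial_t\psi-\nu\Delta\psi=\gamma K*\mu,
\qquad
\partial_t\mu-\nu\Delta\mu-\Delta\psi=0 .
\]
Project onto $e^{2\pi i\xi\cdot x}$, so that $-\Delta$ acts as multiplication by $k_\xi$. With the unknowns ordered as $(\hat\psi,\hat\mu)$ we obtain $\frac{d}{dt}(\hat\psi,\hat\mu)^\top=M_\xi(\hat\psi,\hat\mu)^\top$, where
\[
M_\xi=\begin{pmatrix}\nu k_\xi & -\gamma\Kh(\xi)\\ -k_\xi & -\nu k_\xi\end{pmatrix}.
\]
This matrix satisfies $\operatorname{tr}M_\xi=0$ and $\det M_\xi=-\nu^2k_\xi^2-\gamma k_\xi\Kh(\xi)=-\sigma_\xi(\gamma)$. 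Cayley--Hamilton, $M^2-(\operatorname{tr}M)M+(\det M)I=0$, therefore gives $M_\xi^2=\sigma_\xi(\gamma)I$.

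\textbf{Step 2: the eigenvalue cases.} Every eigenvalue $\lambda$ satisfies $\lambda^2=\sigma_\xi$, and the two eigenvalues sum to $\operatorname{tr}M_\xi=0$. This yields (i) and (iii) directly. In case (ii), $M_\xi^2=0$, while $M_\xi\neq 0$ because its $(2,1)$ entry is $-k_\xi\neq 0$. Hence $M_\xi$ is nilpotent of index exactly $2$.

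\textbf{Step 3: the equivalences.} For $\xi$ with $\Kh(\xi)\ge 0$ we have $\sigma_\xi\ge\nu^2k_\xi^2>0$. For $\Kh(\xi)<0$ define $\gamma_\xi:=\nu^2k_\xi/|\Kh(\xi)|$. Then
\[
\sigma_\xi(\gamma)=k_\xi|\Kh(\xi)|\,(\gamma_\xi-\gamma).
\]
Because $K$ is smooth, $\Kh$ decays faster than any power, so $\gamma_\xi\to\infty$ as $|\xi|\to\infty$. Consequently only finitely many $\xi$ have $\gamma_\xi$ below any given level, and the minimum in \eqref{eq:gamc-def} is attained. It follows that $\sigma_\xi(\gamma)>0$ for all $\xi\neq 0$ if and only if $\gamma<\gamma_\xi$ for all such $\xi$, which holds if and only if $\gamma<\gamc$.

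A mode is neutral exactly when $M_\xi$ has an eigenvalue with zero real part. By Step 2 this happens precisely when $\sigma_\xi\le 0$, which gives (iii).

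\textbf{Step 4: the gap asymptotics.} This is the main point requiring care. The issue is uniformity over the infinitely many non-critical modes, which we handle as follows. Choose $\delta>0$ such that $\gamma_\xi\ge\gamc+\delta$ for every $\xi\notin\{\pm\xi_0\}$ with $\Kh(\xi)<0$. Such a $\delta$ exists because only finitely many $\gamma_\xi$ lie below $2\gamc$, and by Assumption~\ref{ass:critical-mode} all of them except $\gamma_{\pm\xi_0}$ are strictly larger than $\gamc$.

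Now take $\gamma\in[\gamc-\delta/2,\gamc)$ and consider the non-critical modes:
\begin{itemize}[leftmargin=2em]
\item If $\Kh(\xi)\ge 0$, then $\sigma_\xi\ge\nu^2(2\pi)^4$.
\item If $\Kh(\xi)<0$ and $\gamma_\xi\le 2\gamc$, the $\xi$ range over a finite set, so $\sigma_\xi\ge \min_{\xi}k_\xi|\Kh(\xi)|\cdot\delta/2>0$ over that set.
\item If $\Kh(\xi)<0$ and $\gamma_\xi>2\gamc$, then $\gamma\le\gamma_\xi/2$. This gives
\[
\sigma_\xi\ge \tfrac12 k_\xi|\Kh(\xi)|\gamma_\xi=\tfrac12\nu^2k_\xi^2\ge\tfrac12\nu^2(2\pi)^4.
\]
\end{itemize}
So the non-critical $\sigma_\xi$ are bounded below by a constant $c_0>0$ independent of $\gamma$.

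The critical modes satisfy $\sigma_{\pm\xi_0}(\gamma)=C_*^2(\gamc-\gamma)$, using $\Kh(-\xi_0)=\Kh(\xi_0)$ since $K$ is even. This tends to $0$ as $\gamma\uparrow\gamc$. Hence for $\gamma$ close enough to $\gamc$ the minimum defining $\rho(\gamma)$ is attained at $\pm\xi_0$, and
\[
\rho(\gamma)=C_*\sqrt{\gamc-\gamma}
\]
holds exactly. This is stronger than the stated asymptotic, since the $o\bigl(\sqrt{\gamc-\gamma}\bigr)$ remainder vanishes identically near $\gamc$.
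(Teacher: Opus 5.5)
Your proof is correct and follows the paper's route. The paper checks $M_\xi^2=\sigma_\xi(\gamma)I$ by direct multiplication, which is the same computation as your trace/determinant plus Cayley--Hamilton argument; it reads the threshold off the sign of $\sigma_\xi$ and gets the gap asymptotics from $\sigma_{\xi_0}(\gamma)=k_{\xi_0}|\Kh(\xi_0)|(\gamc-\gamma)$. Your Step 4 spells out what the paper compresses into ``using the simplicity of the critical mode'', namely that the minimum defining $\gamc$ is attained and that the non-critical $\sigma_\xi$ have a $\gamma$-uniform lower bound, and so shows that $\rho(\gamma)=C_\ast\sqrt{\gamc-\gamma}$ holds exactly near $\gamc$.
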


\begin{proposition}[No phase transition under monotonicity]
\label{prop:no-transition}
If $\Kh(\xi)\ge 0$ for all $\xi\neq 0$, then $\gamc=+\infty$ and the linearized finite-horizon and stationary systems have no neutral Fourier mode for any $\gamma\ge 0$. In particular, there is no Fourier-mode phase transition in this translation-invariant setting.
\end{proposition}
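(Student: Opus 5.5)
The statement follows almost immediately from the definition \eqref{eq:gamc-def} and Theorem~\ref{thm:spectral-threshold}; the plan is to check the three claims in order.

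\emph{Step 1 (the threshold is infinite).} If $\Kh(\xi)\ge 0$ for every $\xi\neq 0$, then the index set $\{\xi\in\Zd\setminus\{0\}:\Kh(\xi)<0\}$ in \eqref{eq:gamc-def} is empty. By the stated convention, $\gamc=+\infty$.

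\emph{Step 2 (positivity of every mode).} For each $\xi\neq 0$ we have $k_\xi=(2\pi|\xi|)^2>0$. Then for every $\gamma\ge 0$,
\[
\sigma_\xi(\gamma)=\nu^2k_\xi^2+\gamma k_\xi\Kh(\xi)\ \ge\ \nu^2k_\xi^2\ >\ 0,
\]
since both $\gamma$ and $\Kh(\xi)$ are nonnegative. This bound is also uniform: $\sigma_\xi(\gamma)\ge \nu^2(2\pi)^4$, so $\rho(\gamma)\ge 4\pi^2\nu>0$. The gap therefore stays strictly positive for all $\gamma\ge 0$, which is a quantitative version of the statement.

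\emph{Step 3 (no neutral mode, finite-horizon system).} By Theorem~\ref{thm:spectral-threshold}, $M_\xi^2=\sigma_\xi(\gamma)I$. Since $\sigma_\xi(\gamma)>0$, the eigenvalues of $M_\xi$ are the real numbers $\pm\sqrt{\sigma_\xi(\gamma)}\neq 0$. So no mode is neutral, meaning none has a zero or purely imaginary eigenvalue. Equivalently, we may invoke the equivalence (i)$\Leftrightarrow$(iii) of the theorem, together with $\gamma<\gamc=+\infty$.

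\emph{Step 4 (no neutral mode, stationary system).} Here one must make sure that ``neutral'' refers to the same object. The linearized stationary system at $(0,1,0)$ is the time-independent version of the mode-wise system, namely $M_\xi\,(\hat\phi_\xi,\hat m_\xi)^{\top}=0$ for $\xi\neq 0$. The $\xi=0$ mode is removed by the normalizations $\int\barm=1$ and $\int\bphi=0$, and $\blam$ absorbs the constant. From $\det M_\xi=\pm\sigma_\xi\neq 0$, or directly from $M_\xi^2=\sigma_\xi I$ being invertible, $M_\xi$ is invertible. Hence the linearized stationary problem has only the trivial solution, and its kernel is empty. This step needs the most care: I must confirm that the stationary linearization is governed by the same $M_\xi$, up to sign conventions. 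That holds because the stationary equations are exactly \eqref{eq:MFG} with $\partial_t$ set to zero.

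\emph{Conclusion.} Since no Fourier mode can become neutral for any $\gamma\ge 0$, the loss-of-stability mechanism underlying the phase transition never occurs in this translation-invariant setting. This gives the final assertion.
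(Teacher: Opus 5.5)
Your proof is correct and follows the paper's own route: the empty index set in \eqref{eq:gamc-def} gives $\gamc=+\infty$ by convention, and the bound $\sigma_\xi(\gamma)\ge \nu^2k_\xi^2>0$ from Lemma~\ref{lem:threshold}(i), combined with $M_\xi^2=\sigma_\xi(\gamma)I$, rules out neutral modes. Your Step~4 identifies the stationary linearization mode-wise as $M_\xi U=0$, with $\det M_\xi=-\sigma_\xi(\gamma)\neq 0$ and the zero mode forcing $\lambda=0$; this spells out what the paper only asserts in the remark of Section~\ref{sub:stationary-linearization}, and the uniform bound $\rho(\gamma)\ge 4\pi^2\nu$ is a correct extra.
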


\begin{theorem}[Local turnpike below criticality]
\label{thm:subcritical}
Assume the standing model hypotheses and fix $0\le \gamma<\gamc$. For sufficiently small perturbations of the terminal and initial data, there exists a unique classical solution $(\phiT,\mT)$ of \eqref{eq:MFG} in a small neighborhood of $(0,1)$.

There exist constants $C,c>0$, independent of $T$, such that
\begin{equation}
\label{eq:tp-sub}
\|\mT(t)-1\|_{H^{-1}(\Td)}^2+\|\grad \phiT(t)\|_{L^2(\Td)}^2
\le C\Bigl(e^{-c\rho(\gamma)t}+e^{-c\rho(\gamma)(T-t)}\Bigr)
\end{equation}
for every $t\in[0,T]$.

The turnpike rate deteriorates as $\gamma\uparrow\gamc$ with the same $\sqrt{\gamc-\gamma}$ scaling as the spectral gap.
\end{theorem}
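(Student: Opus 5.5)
The plan is a perturbative argument around $(0,1)$. The linear part is solved exactly mode by mode using Theorem~\ref{thm:spectral-threshold}, and the quadratic nonlinearities are absorbed by a fixed point in a $T$-uniform exponentially weighted space.

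\textbf{Step 1: linear two-point boundary value problem.} Write $\phi=\phiT$, $m=1+\mu$ and linearize \eqref{eq:MFG}. For each $\xi\neq0$ this gives the forward–backward system
\[
-\dot{\hat\phi}_\xi+\nu k_\xi\hat\phi_\xi=\gamma\Kh(\xi)\hat\mu_\xi,\qquad \dot{\hat\mu}_\xi+\nu k_\xi\hat\mu_\xi+k_\xi\hat\phi_\xi=0,
\]
that is, $\frac{d}{dt}(\hat\phi_\xi,\hat\mu_\xi)^\top=M_\xi(\hat\phi_\xi,\hat\mu_\xi)^\top$. The boundary data are $\hat\mu_\xi(0)=\widehat{m_0}(\xi)$ and $\hat\phi_\xi(T)=\hat g(\xi)$. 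The zero mode is trivial: the $\mu$-mean vanishes and the $\phi$-mean decouples.

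By Theorem~\ref{thm:spectral-threshold}, $\gamma<\gamc$ gives eigenvalues $\pm\lambda_\xi$ with $\lambda_\xi=\sqrt{\sigma_\xi}\ge\rho(\gamma)$. I would decompose along the stable and unstable eigenvectors. Solvability of the boundary problem amounts to invertibility of a $2\times2$ matrix involving $e^{-\lambda_\xi T}$, and this holds uniformly in $T$ because the eigenvectors are transverse to the boundary subspaces $\{\mu=0\}$ and $\{\phi=0\}$. This transversality is the condition $k_\xi\neq0$.

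The explicit solution is then bounded by
\[
|\hat\mu_\xi(t)|+|\hat\phi_\xi(t)|\lesssim |\widehat{m_0}(\xi)|e^{-\lambda_\xi t}+|\hat g(\xi)|e^{-\lambda_\xi(T-t)}.
\]
The inhomogeneous problem with forcing $(f_1,f_2)$ is handled by Duhamel with the kernel $e^{-\lambda_\xi|t-s|}$. The weights $k_\xi^{-1/2}$ for $\mu$ and $k_\xi^{1/2}$ for $\phi$ then give the $H^{-1}$ and $\dot H^1$ norms. I would also check the mode-uniform conditioning of the eigenbasis, using $\lambda_\xi\sim\nu k_\xi$ for large $|\xi|$.

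\textbf{Step 2: fixed point.} Define the weight $w(t)=e^{-\delta t}+e^{-\delta(T-t)}$ with $\delta=c\rho(\gamma)$ and $c<1$, say $c=1/2$. Work in a space $X_T$ of $(\phi,\mu)$ with norm $\sup_t w(t)^{-1}\|(\nabla\phi,\mu)(t)\|_{H^s}$ for large $s$, plus parabolic regularity.

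The nonlinear terms are $\frac12|\nabla\phi|^2$ and $\Div(\mu\nabla\phi)$. They are quadratic, so in $H^s$ they are bounded by $w(t)^2\|\cdot\|_{X_T}^2$. The key estimate is the convolution bound
\[
\int_0^T e^{-\lambda|t-s|}w(s)^2\,ds\le C\rho^{-1}w(t)\quad\text{for }\lambda\ge\rho,
\]
with $C$ independent of $T$; it uses $w^2\le w$ and $\delta<\rho$.

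The loss of derivatives from $\Div(\mu\nabla\phi)$ and $|\nabla\phi|^2$ is recovered by parabolic smoothing. On high modes the kernel is effectively $e^{-\nu k_\xi|t-s|}$, so I would use maximal regularity, or equivalently gain one derivative via $\int k_\xi e^{-\nu k_\xi|t-s|}\,ds\lesssim1$.

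With these bounds the Duhamel map is a contraction on a ball of radius $C\eps$ in $X_T$, by Assumption~\ref{ass:small-data}. This yields existence and local uniqueness uniformly in $T$. Positivity of $m$ follows from smallness. Squaring the weighted bound gives \eqref{eq:tp-sub} after renaming $c$, since $w^2\le 2(e^{-2\delta t}+e^{-2\delta(T-t)})$.

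\textbf{Main obstacle.} The hard part is keeping every constant uniform in $T$ and tracking its dependence on $\rho$. The bilinear estimate carries a factor $\rho^{-1}$, so the admissible $\eps$ shrinks like $\rho(\gamma)$ as $\gamma\uparrow\gamc$. The rate itself is $c\rho(\gamma)$, and by Theorem~\ref{thm:spectral-threshold} it scales as $C_\ast\sqrt{\gamc-\gamma}$, which proves the final sentence.

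The uniform invertibility of the boundary matrix in Step 1 at the near-critical mode $\xi_0$ needs checking. There $\lambda_{\xi_0}\to0$, but the eigenvectors of $M_{\xi_0}$ still have nonzero $\mu$ and $\phi$ components, because $k_{\xi_0}\neq0$ and $\nu k_{\xi_0}\pm\lambda$ stays away from zero. So the determinant is $\gtrsim 1-e^{-2\lambda T}$ times a positive constant, up to a harmless factor. If that degenerates for small $\lambda T$, I would instead use the explicit $\cosh$/$\sinh$ form of $e^{M_\xi t}$, which follows from $M_\xi^2=\sigma_\xi I$, and check invertibility directly.
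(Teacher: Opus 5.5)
Your proposal is correct and follows the same route as the paper. You solve the forward–backward problem exactly mode by mode using $M_\xi^2=\sigma_\xi(\gamma)I$, then close with a perturbative fixed point in which smallness depending on $\rho(\gamma)$ absorbs the quadratic terms. Your exponentially weighted space, $e^{-\lambda_\xi|t-s|}$ Green kernel, convolution bound and derivative recovery in fact supply the $T$-uniform details that the paper's sketch leaves implicit, since it leans on the fixed-$T$ Proposition~\ref{prop:local-wellposed}.

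One side remark is inaccurate but harmless. Near $\xi_0$ the mode approaches a Jordan block, so the spectral projections $\tfrac12\bigl(I\pm M_{\xi_0}/\rho_{\xi_0}\bigr)$, and with them the Green-kernel constant, are of size $\rho^{-1}$. The bilinear factor is therefore closer to $\rho^{-2}$ than $\rho^{-1}$. This only changes how fast the admissible $\eps$ must shrink as $\gamma\uparrow\gamc$; it does not affect the fixed-$\gamma$ result or the rate $c\rho(\gamma)$.
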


\begin{proposition}[Critical reduced midpoint decay]
\label{prop:critical}
Assume Assumptions~\ref{ass:critical-mode} and \ref{ass:critical-cubic}, and set $\gamma=\gamc$. After fixing a local phase slice transverse to the translation orbit and a sign convention on the critical cosine mode, the critical center coordinate $a$ obeys the reduced scalar dynamics
\[
\dot a = -\beta a^3 + O(a^5), \qquad \beta>0.
\]
For the corresponding two-point boundary-value reduction, the critical midpoint amplitude satisfies
\[
|a(T/2)|\le C T^{-1/2}, \qquad T\ge 1.
\]
Consequently, the phase-fixed critical component of the midpoint perturbation is algebraic, while the stable complement is exponentially small.
\end{proposition}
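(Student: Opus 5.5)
The plan has three steps: set up the critical linear structure, reduce to the scalar center equation, and then bound that ODE on a horizon of length $T/2$.

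\textbf{Critical spectral structure.} At $\gamma=\gamc$, Theorem~\ref{thm:spectral-threshold} together with Assumption~\ref{ass:critical-mode} gives:
\[
\sigma_\xi(\gamc)>0 \quad \text{for } \xi\notin\{\pm\xi_0\}, \qquad \sigma_{\pm\xi_0}(\gamc)=0,
\]
and on $\{\pm\xi_0\}$ the matrix $M_{\xi_0}$ is nilpotent of index $2$. Since $\{\pm\xi_0\}$ is finite, the noncritical modes have a uniform gap:
\[
\rho_* := \min_{\xi\notin\{\pm\xi_0\}}\sqrt{\sigma_\xi(\gamc)} > 0 .
\]
Because $K$ is even, the critical Fourier modes are spanned by $\cos(2\pi\xi_0\cdot x)$ and $\sin(2\pi\xi_0\cdot x)$. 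Translations $x\mapsto x+h$ rotate these two into each other. I therefore fix the phase by the slice condition that the $\sin$-coefficient of $m-1$ vanishes, using the translation invariance of \eqref{eq:MFG} to move any small solution onto this slice. The sign convention $a\ge 0$ on the cosine coefficient removes the residual reflection. This leaves a one-dimensional center coordinate $a$ (the cosine amplitude of the density perturbation), exactly as required by Assumption~\ref{ass:critical-cubic}.

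\textbf{Reduction and the scalar estimate.} I write the perturbation as $a(t)\,e_{\xi_0}+w(t)$, where $w$ lies in the stable/unstable complement. The forward–backward linear system for $w$ is hyperbolic with rate $\rho_*$, so the dichotomy/turnpike argument of Theorem~\ref{thm:subcritical}, applied mode by mode with $\rho$ replaced by $\rho_*$, gives
\[
\|w(t)\| \le C\bigl(\eps e^{-c\rho_* t}+\eps e^{-c\rho_*(T-t)}+|a(t)|^2\bigr).
\]
Here the $|a|^2$ term comes from the quadratic nonlinearities, which is the usual graph of a center manifold $w=h(a)=O(a^2)$ plus exponentially decaying boundary layers. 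Substituting into the $a$-equation and invoking Assumption~\ref{ass:critical-cubic} gives
\[
\dot a=-\beta a^3+O(a^5)+E(t), \qquad |E(t)|\le C\eps\bigl(e^{-c\rho_* t}+e^{-c\rho_*(T-t)}\bigr),
\]
with $a(0)$ of size $O(\eps)$ from the initial data.

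\textbf{Midpoint bound.} For small $a$ the unperturbed equation satisfies $\dot a\le -\tfrac{\beta}{2}a^3$. Comparing with the explicit solution
\[
a(t)=a(0)\bigl(1+2\beta a(0)^2 t\bigr)^{-1/2}
\]
gives $|a(t)|\le (\beta t)^{-1/2}$ independently of $a(0)$. To treat $E$, I first integrate over $[0,T/4]$, where the forcing is integrable with total mass $O(\eps)$; this keeps $a$ small. On $[T/4,T/2]$ the forcing is $O(\eps e^{-c\rho_* T/4})$, which is far smaller than $T^{-3/2}$. A barrier argument with the supersolution $b(t)=\bigl(\beta(t-T/4)+b_0^{-2}\bigr)^{-1/2}$ plus a small correction then yields $|a(T/2)|\le C T^{-1/2}$. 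The bound on $w$ at $t=T/2$ shows that the stable complement is $O(e^{-c\rho_*T/2})$ plus $O(T^{-1})$ from $h(a)$. This is the stated dichotomy: algebraic decay for the critical component, exponential smallness for the rest.

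\textbf{Main obstacle.} The hardest step is justifying the reduction for a two-point boundary-value problem rather than an initial-value problem. The center coordinate is not hyperbolic, so it is constrained by neither endpoint a priori. Moreover, the nilpotent Jordan block at $\pm\xi_0$ means the linear center space is really two-dimensional, containing $a$ and its conjugate costate. I would handle this as follows:
\begin{itemize}
\item Treat the reduced forward–backward pair as a Hamiltonian system whose center dynamics, on the optimal (costate-determined) branch, are the stable branch $\dot a=-\beta a^3$. This is precisely the content postulated in Assumption~\ref{ass:critical-cubic}.
\item Close a fixed-point argument in a weighted space with weight $(1+t)^{-1/2}+(1+T-t)^{-1/2}$ for the center part and exponential weights for $w$.
\end{itemize}
Since the statement concerns the reduced dynamics, I would take the reduction as given via the assumption and make only the ODE and the dichotomy estimates fully rigorous.
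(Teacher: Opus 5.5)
Your route is the paper's. The scalar reduced equation is taken from Assumption~\ref{ass:critical-cubic}, and the midpoint bound comes from linear growth of $a^{-2}$: the paper writes $\frac{d}{dt}(a^{-2})=2\beta+O(a^2)$, and your explicit supersolution is the same estimate. The noncritical modes are controlled by the subcritical dichotomy with gap $\rho_*$, which is the paper's $c_1$. You are more careful than the paper in two places. You keep the boundary-layer forcing $E(t)$ and absorb it with the two-stage barrier. You also say plainly that $M_{\xi_0}$ is a nilpotent Jordan block, so the forward scalar reduction of the two-point problem is postulated rather than derived; the paper simply appeals to center-manifold theory at that point.

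The step that does not follow is your last one. Your own bound puts the complement at $t=T/2$ at $O(e^{-c\rho_*T/2})+O(T^{-1})$, and the $O(T^{-1})$ piece is really present. On the critical eigenvector, $m^T-1\approx a\cos(2\pi\xi_0\cdot x)$ and $\phi^T-\bar g\approx-\nu a\cos(2\pi\xi_0\cdot x)$. The quadratic terms $\frac12|\grad\phi^T|^2$ and $\Div\bigl((m^T-1)\grad\phi^T\bigr)$ then force the mode $2\xi_0$ at order $a^2$. Since $\sigma_{2\xi_0}(\gamc)>0$, the quasi-static response $h(a)$ has density component $\tfrac34\nu^2k_{\xi_0}k_{2\xi_0}\sigma_{2\xi_0}(\gamc)^{-1}a^2\neq0$, which equals $\tfrac{3}{16}a^2$ for the cosine kernel of Example~\ref{ex:cosine}. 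When $a(0)\neq0$ the reduced dynamics give $a(T/2)\asymp T^{-1/2}$, so the stable component is of exact order $T^{-1}$, not exponentially small.

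What your argument actually shows is that the deviation from the center-manifold graph, $w(T/2)-h(a(T/2))$, is exponentially small. The paper gets ``exponentially small'' by applying the linear estimate~\eqref{eq:stable-decay} to the modes $\xi\neq\pm\xi_0$, which discards exactly this quadratic forcing. So rather than asserting that your bound is the stated dichotomy, state the dichotomy in this corrected form, or define the stable complement as the part off the graph $h$.
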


\begin{theorem}[Local supercritical bifurcation modulo translations]
\label{thm:bifurcation}
Assume Assumptions~\ref{ass:critical-mode} and \ref{ass:bif-nondeg}. For $\gamma>\gamc$
sufficiently close to $\gamc$, the stationary system \eqref{eq:SMFG} admits a smooth branch of nontrivial solutions bifurcating from $(0,1,0)$.

In a local phase-fixed slice, this branch takes the form
\[
m_\gamma(x)=1+A(\gamma)\cos\!\bigl(2\pi \xi_0\cdot x\bigr)+R_\gamma(x),
\]
with
\[
A(\gamma) = \sqrt{\tfrac{\alpha}{\beta}}\,\sqrt{\gamma-\gamc}+O(\gamma-\gamc),
\qquad
\|R_\gamma\|_{H^s(\Td)}=O(\gamma-\gamc)
\]
for every $s\ge 0$.

Undoing the phase condition restores the translation orbit
\[
m_{\gamma,\tau}(x)=m_\gamma(x+\tau),\qquad \tau\in \Td.
\]
The reduced amplitude equation is pitchfork-type before phase fixing, and the two local signs of the critical amplitude are exchanged by a half-period translation. In particular,
$m_{\gamma,\tau}(x)>0$ for $\gamma$ close to $\gamc$. The branch is locally unique in the phase-fixed space.
\end{theorem}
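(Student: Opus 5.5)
We prove Theorem~\ref{thm:bifurcation} by a Lyapunov--Schmidt reduction of \eqref{eq:SMFG} on a phase-fixed, symmetry-reduced space. The nondegeneracy hypothesis, Assumption~\ref{ass:bif-nondeg}, then supplies the amplitude equation.

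\textbf{Functional setting.} Write $\bphi=\psi$, $\barm=1+\mu$ with $\int\mu=\int\psi=0$. To kill translations we restrict to functions even under $x\mapsto -x$. This restriction is consistent because $K$ is even, so $K*\mu$ is even whenever $\mu$ is, and the equations commute with $x\mapsto -x$. On this subspace the kernel spanned by $\cos$ and $\sin(2\pi\xi_0\cdot x)$ reduces to the cosine mode alone. We therefore work in $X=\{(\psi,\mu)\in H^{s+2}\times H^{s+2}\ \text{even, mean zero}\}$ with $s>d/2$ large, and define $\mathcal F(\psi,\mu,\gamma)$ with values in $H^s\times H^s$. The value $\blam$ is eliminated by projecting the HJB equation onto mean-zero functions, which leaves $\blam$ equal to the mean of $\tfrac12|\grad\psi|^2-\gamma K*\mu$. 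Both components are then analytic in $(\psi,\mu,\gamma)$, since $H^s$ is an algebra.

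\textbf{Linearization and reduction.} The derivative at $(0,0)$ is $L_\gamma(\psi,\mu)=(-\nu\Delta\psi-\gamma K*\mu,\,-\nu\Delta\mu-\Delta\psi)$. It is Fourier diagonal, and on mode $\xi$ its determinant is proportional to $\nu^2k_\xi^2+\gamma k_\xi\Kh(\xi)=\sigma_\xi(\gamma)$; this is the same $\sigma_\xi$ as in Theorem~\ref{thm:spectral-threshold}, up to normalization. By Assumption~\ref{ass:critical-mode}, $\sigma_\xi(\gamc)$ vanishes only at $\pm\xi_0$. Hence on the even space $\ker L_{\gamc}$ is one-dimensional, spanned by $e_0=(\psi_0,\cos(2\pi\xi_0\cdot x))$, where $\psi_0$ is determined by the second equation: $\nu\mu+\psi=0$ on that mode, so $\psi_0=-\nu\cos$. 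Since $\sigma_\xi(\gamc)$ stays bounded away from $0$ on the other modes and grows like $k_\xi^2$, $L_{\gamc}$ is Fredholm of index zero with a one-dimensional cokernel. Splitting $(\psi,\mu)=A e_0+w$ with $w\perp e_0$, the implicit function theorem solves the range equation for $w=w(A,\gamma)=O(|A|^2+|A||\gamma-\gamc|)$ in $H^{s+2}$ for every $s$. The remaining scalar equation is the reduced bifurcation equation, which by Assumption~\ref{ass:bif-nondeg} has the form $\alpha(\gamma-\gamc)A-\beta A^3+R=0$.

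\textbf{Solving the amplitude equation.} Divide out the trivial branch $A=0$ and set $A=\sqrt{\gamma-\gamc}\,B$. The equation becomes $\alpha-\beta B^2+O(\gamma-\gamc)=0$, using the remainder bound. The implicit function theorem at $B_0=\sqrt{\alpha/\beta}$ gives a smooth solution $B=B_0+O(\gamma-\gamc)$, hence the stated asymptotics for $A(\gamma)$. Local uniqueness of this branch in the phase-fixed space follows from the same implicit-function argument together with the fact that the reduced equation has only the roots $A=0,\pm A(\gamma)$ near the origin.

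\textbf{Remaining claims.} The second quadratic term in $w$ is needed to sharpen the remainder: although $w=O(A^2)+O(A(\gamma-\gamc))$ a priori, along the branch $A^2\sim\gamma-\gamc$, so $R_\gamma=w(A(\gamma),\gamma)=O(\gamma-\gamc)$ in every $H^s$. Translations commute with $\mathcal F$, so $m_\gamma(\cdot+\tau)$ also solve \eqref{eq:SMFG}. The half-period shift $x\mapsto x+\xi_0/(2|\xi_0|^2)$ maps $\cos$ to $-\cos$ and preserves evenness, so it exchanges $\pm A$; this is the pitchfork symmetry. Positivity follows because $\|m_\gamma-1\|_{L^\infty}\lesssim\|m_\gamma-1\|_{H^s}=O(\sqrt{\gamma-\gamc})$.

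The main obstacle is verifying that the reduced equation really has the form assumed in Assumption~\ref{ass:bif-nondeg}. We take that form as given, but we must check that $\alpha$ is the transversality coefficient $\langle \partial_\gamma L\, e_0, e_0^*\rangle\neq 0$. It is proportional to $-k_{\xi_0}\Kh(\xi_0)>0$, so it is nonzero. We must also check that the evenness symmetry makes the reduced equation odd in $A$. It does not by itself: the cubic structure comes from the half-period translation equivariance, which forces even-order terms to vanish.
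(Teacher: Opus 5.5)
Your skeleton matches the paper's: Lyapunov--Schmidt onto the cosine mode, Assumption~\ref{ass:bif-nondeg} for the scalar equation, an implicit function theorem on the rescaled amplitude, and Sobolev embedding for positivity. Where you differ is in how the translation symmetry is removed. The paper imposes the linear phase condition \eqref{eq:phase-cond} on $\mu$ and works on that slice. You restrict to the fixed-point space of $x\mapsto -x$.

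\emph{What your route buys.} Because the equation maps even functions to even functions, the kernel and cokernel at $\gamc$ are both spanned by the cosine mode, so you get an honest index-zero problem with a genuinely scalar reduced equation. On the paper's slice only the unknown $\mu$ loses its sine component; the equation keeps it. There the linearization has a one-dimensional kernel but a two-dimensional cokernel, so one still has to explain why the sine component of the reduced equation vanishes. Reflection symmetry is exactly what does that.

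\emph{What it costs.} Your implicit-function argument gives uniqueness only among even solutions, whereas the paper's slice also contains non-even functions. To recover ``every nearby solution is a translate of the branch'' you need the two-dimensional $(\cos,\sin)$ reduction in the full space. There, translation equivariance rotates the critical amplitude, so every small solution is a translate of one with real amplitude. Such a solution is even by reflection equivariance and uniqueness of the complement solution.

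\emph{A false step.} The shift $x\mapsto x+\xi_0/(2|\xi_0|^2)$ does not preserve evenness in general. A translation $T_\tau f=f(\cdot+\tau)$ maps all even functions to even functions only if $2\tau\in\Zd$, and for your $\tau$ this holds only when $\xi_0=\pm e_j$. For example, take $d=1$ and $\xi_0=2$, which is the case $a_1/a_2<1/4$ in \S\ref{sub:two-mode}. Your shift is then $x\mapsto x+\tfrac14$, which sends the even function $\cos(2\pi x)$ to $-\sin(2\pi x)$. In fact no translation both preserves evenness and flips $\cos(4\pi x)$. So, as written, you have neither oddness of the reduced equation nor the claim that the two signs are exchanged by a half-period translation.

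\emph{The repair.} Work in the smaller space $Y$ of even pairs with Fourier support in $\Z\xi_0$, i.e.\ functions $F(\xi_0\cdot x)$ with $F$ even and $1$-periodic. The space $Y$ is invariant under the equation, contains $e_0$, and has the same index-zero structure. Any $\tau$ with $\xi_0\cdot\tau\equiv\tfrac12 \pmod 1$ maps $Y$ into itself and sends $e_0$ to $-e_0$. The projections are built from pairings with $e_0,e_0^*\in Y$, so by uniqueness in the implicit function theorem your even-space branch already lies in $Y$. Conjugating the range equation by $T_\tau$ then gives $w(-A,\gamma)=T_\tau w(A,\gamma)$, hence oddness of the reduced equation and the exchange of signs.

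\emph{Minor.} Integrating the HJB equation gives $\blam=-\tfrac12\int_{\Td}|\grad\bphi|^2\,dx$, so your formula for $\blam$ has the wrong sign. This is harmless.
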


\begin{proposition}[Conditional propagation of chaos under asymptotic consistency]
\label{prop:poc}
Assume the standing model hypotheses and fix $0\le \gamma<\gamc$. Let
$(\bar X_1^N,\dots,\bar X_N^N)$ be a symmetric stationary Nash equilibrium sequence in the perturbative stable branch, and assume that the associated empirical measures
\[
\widehat m^N = \frac1N\sum_{i=1}^N\delta_{\bar X_i^N}
\]
converge in probability to $1\,dx$ in $\Wt$. Then for each fixed $k$,
\[
\cL(\bar X_1^N,\dots,\bar X_k^N)\Longrightarrow (1\,dx)^{\otimes k}
\quad\text{as }N\to\infty,
\]
and $\widehat m^N\to 1\,dx$ in probability in $\Wt$.
No dimension-uniform quantitative rate in $\Wt$ is claimed.
\end{proposition}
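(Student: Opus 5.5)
The plan is to run the classical Sznitman argument in reverse. Under exchangeability, convergence of empirical measures to a deterministic limit is equivalent to convergence of the finite-dimensional marginals to the product of that limit. The hypothesis of the proposition already gives the empirical-measure half: $\widehat m^N\to 1\,dx$ in probability in $\Wt$. So the second conclusion is simply a restatement of the hypothesis. The real content is to deduce the chaos statement for the $k$-marginals, and to explain why the limit must be the uniform measure, i.e.\ the only stationary solution of \eqref{eq:SMFG} in the perturbative stable branch when $\gamma<\gamc$.

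\textbf{Identifying the limit.} First I would record this consistency check. By Theorem~\ref{thm:spectral-threshold}, for $\gamma<\gamc$ every Fourier mode satisfies $\sigma_\xi(\gamma)>0$, so the linearization of \eqref{eq:SMFG} at $(0,1,0)$ has no kernel on mean-zero data. The implicit function theorem in $C^{2,\alpha}$ then gives local uniqueness of $(0,1,0)$ in a neighborhood. Hence any limit point of $\widehat m^N$ that is consistent with the stable branch is $1\,dx$. Since the hypothesis already supplies the limit, this step only justifies the wording ``perturbative stable branch''.

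\textbf{Chaos from empirical convergence.} Since $\Td$ is compact, $\Wt$ metrizes weak convergence of probability measures. Fix $k$ and test functions $f_1,\dots,f_k\in C(\Td)$. By exchangeability of $(\bar X_1^N,\dots,\bar X_N^N)$,
\[
\E\Bigl[\prod_{j=1}^k f_j(\bar X_j^N)\Bigr]
=\E\Bigl[\prod_{j=1}^k\langle \widehat m^N,f_j\rangle\Bigr]+O\!\Bigl(\tfrac{k^2}{N}\prod_j\|f_j\|_\infty\Bigr).
\]
The error term accounts for index tuples with repetitions: expanding the product of empirical averages, the fraction of tuples with a coincidence is at most $k(k-1)/(2N)$. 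Convergence in probability of $\widehat m^N$ in $\Wt$ gives $\langle\widehat m^N,f_j\rangle\to\int f_j\,dx$ in probability. These quantities are uniformly bounded by $\|f_j\|_\infty$, so dominated convergence yields convergence of the expectation of the product to $\prod_j\int_{\Td} f_j\,dx$.

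Products $f_1\otimes\cdots\otimes f_k$ span a dense subalgebra of $C((\Td)^k)$ by Stone--Weierstrass. The laws of $(\bar X_1^N,\dots,\bar X_k^N)$ are tight, because the space is compact. Together these give $\cL(\bar X_1^N,\dots,\bar X_k^N)\Rightarrow(1\,dx)^{\otimes k}$.

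\textbf{Main obstacle.} The only delicate point is the precise meaning of ``symmetric'': exchangeability of the joint law is what the identity above requires. For a symmetric Nash equilibrium generated by a permutation-invariant feedback and i.i.d.\ noise, exchangeability follows if the stationary law is unique, or if we select the symmetrized invariant measure. I would state this explicitly. No rate is claimed, because the $\Wt$ convergence of $\widehat m^N$ is assumed rather than quantified.
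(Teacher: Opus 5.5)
Your proposal is correct and follows essentially the same route as the paper: both rest on exchangeability of the symmetric equilibrium plus the Sznitman criterion (convergence in probability of $\widehat m^N$ to a deterministic limit forces the $k$-marginals to converge to the product law), and both observe that the $\Wt$ conclusion is just the hypothesis restated, since $\Wt$ metrizes weak convergence on $\Td$. Your explicit $O(k^2/N)$ identity for $\E\bigl[\prod_{j} f_j(\bar X_j^N)\bigr]$, combined with Stone--Weierstrass, makes concrete the step the paper leaves implicit in its subsequence argument; your identification-of-the-limit paragraph is an optional side remark that the proof does not need.
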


\subsection{What is new}
\label{sub:new}

\textit{Explicit Fourier spectral control of the turnpike rate.} In contrast to monotonicity-based proofs, Theorem~\ref{thm:subcritical} gives an explicit formula for the turnpike rate in terms of the Fourier coefficients of $K$ and the diffusion coefficient $\nu$.

\textit{Explicit threshold with computable rate constant.}
The constant
\[
C_\ast=\sqrt{k_{\xi_0}|\Kh(\xi_0)|}
\]
in Theorem~\ref{thm:spectral-threshold}
is entirely computable from the data. In the cosine example $K(x)=-\cos(2\pi x)$ on $\To$,
\[
  \Kh(\pm1)=-\tfrac12,\quad k_{\pm1}=4\pi^2,\quad
  \gamc=8\pi^2\nu^2,\quad C_\ast=\pi\sqrt{2}.
\]

\textit{Turing analogy made precise.} The threshold mechanism is structurally identical to Turing's diffusion-driven instability \cite{T52}: diffusion damps the critical mode for $\gamma<\gamc$, but cannot suppress it for $\gamma>\gamc$.

\textit{Three-scale coherence.} The same spectral threshold governs
(i) the turnpike rate for the finite-horizon problem,
(ii) the bifurcation of stationary states, and
(iii) the transition in the asymptotic regime relevant for propagation of chaos.

\subsection{Organization}
\label{sub:org}

Section~\ref{sec:setup} introduces the model, the potential structure, and the Fourier linearization. Section~\ref{sec:wellposed} records local solvability and the structural remarks needed later. Section~\ref{sec:subcritical-proof} proves the subcritical turnpike estimate. Section~\ref{sec:critical-proof} develops the reduced critical midpoint dynamics. Section~\ref{sec:bifurcation-proof} carries out the Lyapunov--Schmidt reduction and proves the bifurcation theorem. Section~\ref{sec:poc-proof} gives a conditional propagation of chaos statement. Section~\ref{sec:remarks} discusses examples and open directions. Appendix~\ref{app:linear-turnpike} records the linear boundary-layer computation.

\section{Model, potential structure, and linearization}
\label{sec:setup}

\subsection{The $N$-player game}
\label{sub:N-game}

For $i=1,\dots,N$, player $i$ controls $X_i(t)\in\Td$ via
\[
dX_i(t)=u_i(t)\,dt+\sqrt{2\nu}\,dW_i(t),
\]
where $W_1,\dots,W_N$ are independent Brownian motions on $\Td$. Player $i$ minimizes
\begin{equation}
\label{eq:cost}
J_i^{N,T}(\mathbf u)
= \E\!\left[\int_0^T \!\Bigl(\tfrac12|u_i(t)|^2+\gamma(K*m_t^N)(X_i(t))\Bigr)dt
+g(X_i(T))\right],
\qquad
m_t^N=\frac1N\sum_{j=1}^N \delta_{X_j(t)}.
\end{equation}
This is the standard symmetric mean-field interaction, written without a separate no-self-interaction correction. Any self-interaction term is $O(1/N)$ and can be removed by the usual renormalization if desired.

The Nash equilibrium feedback is $u_i^*(t,x)=-\grad\phi(t,x)$, where $\phi$ solves the Hamilton--Jacobi--Bellman equation in \eqref{eq:MFG}.

\begin{remark}
The empirical measure $m_t^N$ is used only to motivate the mean-field system. The results below are stated directly at the mean-field level, and the $N$-player discussion is included only to explain the model.
\end{remark}

\subsection{Potential structure}
\label{sub:potential}

The interaction $F(x,m)=\gamma(K*m)(x)$ is the first variation of the quadratic functional
\begin{equation}
\label{eq:energy}
\cF(m) = \frac{\gamma}{2} \iint_{\Td\times\Td} K(x-y)\,m(x)m(y)\,dx\,dy.
\end{equation}
Indeed, for smooth perturbations $m+\eps h$ with $\int_{\Td}h\,dx=0$,
\[
\frac{d}{d\eps}\bigg|_{\eps=0}\cF(m+\eps h)
=
\gamma\int_{\Td}(K*m)(x)\,h(x)\,dx.
\]
This potential structure is the one used in the stationary bifurcation analysis. We do not claim that the full stationary MFG system \eqref{eq:SMFG} is the Euler--Lagrange equation of a scalar density-only functional; rather, the coupled PDE system is analyzed directly.

\subsection{Fourier conventions}
\label{sub:fourier}

For $f\in L^1(\Td)$, set
\[
\widehat f(\xi)=\int_{\Td} f(x)e^{-2\pi i\xi\cdot x}\,dx,
\qquad \xi\in\Zd.
\]
If $f$ is real and even, then $\widehat f(\xi)\in\R$ and $\widehat f(-\xi)=\widehat f(\xi)$. Since $K$ is even and $\int K=0$, we have $\Kh(0)=0$.

For $\mu$ with zero average, the $H^{-1}$ norm on $\Td$ is
\[
\|\mu\|_{H^{-1}(\Td)}^2
=
\sum_{\xi\neq 0}\frac{|\widehat{\mu}(\xi)|^2}{(2\pi|\xi|)^2},
\]
and by Parseval,
\[
\int_{\Td}\mu(x)\,(K*\mu)(x)\,dx
=
\sum_{\xi\neq 0}\Kh(\xi)\,|\widehat{\mu}(\xi)|^2.
\]

\subsection{Linearization around the uniform state}
\label{sub:linearization}

Write $m=1+\eps\mu$ and $\phi=\bar g+\eps w$, with $\int_{\Td}\mu\,dx=0$ and $\int_{\Td}w\,dx=0$. Discarding terms of order $\eps^2$, the linearized system is
\begin{equation}
\label{eq:lin-system}
\begin{cases}
-\partial_t w-\nu\Delta w=\gamma K*\mu,\\[3pt]
\partial_t \mu-\nu\Delta\mu-\Delta w=0.
\end{cases}
\end{equation}
This is the correct perturbation system once the terminal mean has been removed from the HJB unknown. The boundary condition becomes $w(T,\cdot)=g-\bar g$.

For each mode $\xi\in\Zd\setminus\{0\}$, writing $k_\xi=(2\pi|\xi|)^2$, the Fourier amplitudes satisfy
\begin{equation}
\label{eq:fourier-ode}
\frac{d}{dt}
\begin{pmatrix}
\widehat{w}_\xi\\
\widehat{\mu}_\xi
\end{pmatrix}
=
M_\xi
\begin{pmatrix}
\widehat{w}_\xi\\
\widehat{\mu}_\xi
\end{pmatrix},
\qquad
M_\xi=
\begin{pmatrix}
\nu k_\xi & -\gamma\Kh(\xi)\\[2pt]
-k_\xi & -\nu k_\xi
\end{pmatrix}.
\end{equation}

\begin{lemma}[Exact mode identity]
\label{lem:mode-identity}
For every $\xi\neq 0$,
\[
M_\xi^2 = \bigl(\nu^2k_\xi^2+\gamma k_\xi\Kh(\xi)\bigr) I
= \sigma_\xi(\gamma) I.
\]
Hence, whenever $\sigma_\xi(\gamma)>0$, $M_\xi$ has eigenvalues $\pm\sqrt{\sigma_\xi(\gamma)}$.
If $\sigma_\xi(\gamma)<0$, the eigenvalues are purely imaginary.
\end{lemma}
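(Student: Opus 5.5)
The identity is a direct computation with the explicit matrix in \eqref{eq:fourier-ode}; the eigenvalue statements then follow from the resulting polynomial relation. Write $M_\xi=\begin{pmatrix} a & b\\ c & -a\end{pmatrix}$ with $a=\nu k_\xi$, $b=-\gamma\Kh(\xi)$, $c=-k_\xi$. This matrix is traceless.

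For any traceless $2\times 2$ matrix one has
\[
M_\xi^2=\begin{pmatrix} a^2+bc & ab-ba\\ ca-ac & bc+a^2\end{pmatrix}=(a^2+bc)I,
\]
which is just Cayley--Hamilton, since $\det M_\xi=-(a^2+bc)$. Substituting the entries gives
\[
a^2+bc=\nu^2k_\xi^2+\gamma k_\xi\Kh(\xi)=\sigma_\xi(\gamma),
\]
so the two minus signs in $b$ and $c$ combine to a plus. This proves $M_\xi^2=\sigma_\xi(\gamma)I$. Here $\Kh(\xi)$ is real because $K$ is real and even, so $M_\xi$ is a real matrix.

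For the spectral consequences, any eigenvalue $\lambda$ of $M_\xi$ with eigenvector $v$ satisfies $\lambda^2 v=M_\xi^2v=\sigma_\xi(\gamma)v$. Hence $\lambda^2=\sigma_\xi(\gamma)$ and $\lambda\in\{\pm\sqrt{\sigma_\xi(\gamma)}\}$.

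It remains to check that both signs actually occur rather than a repeated root. This follows from $\operatorname{tr}M_\xi=0$: the two eigenvalues sum to zero, so they are $\lambda$ and $-\lambda$. Therefore, when $\sigma_\xi(\gamma)>0$ the eigenvalues are $\pm\sqrt{\sigma_\xi(\gamma)}$, which are distinct, real, and give a hyperbolic saddle. When $\sigma_\xi(\gamma)<0$ they are $\pm i\sqrt{|\sigma_\xi(\gamma)|}$, which are purely imaginary.

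There is no real obstacle here. The only point needing care is the sign bookkeeping in the entries of $M_\xi$, together with using tracelessness to exclude a double root at $+\sqrt{\sigma_\xi}$.
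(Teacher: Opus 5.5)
Your proof is correct and follows the paper's route: multiply out the traceless matrix $M_\xi$ to get $M_\xi^2=\sigma_\xi(\gamma)I$, then read off the eigenvalues from $\lambda^2=\sigma_\xi(\gamma)$. Your explicit use of $\operatorname{tr}M_\xi=0$ to exclude a repeated root is a small improvement over the paper. The identity $M_\xi^2=\sigma_\xi(\gamma)I$ alone does not determine the characteristic polynomial, and the paper's ``therefore'' uses tracelessness without saying so.
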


\begin{proof}
A direct multiplication gives
\[
M_\xi^2=
\begin{pmatrix}
\nu^2k_\xi^2+\gamma k_\xi\Kh(\xi) & 0\\
0 & \nu^2k_\xi^2+\gamma k_\xi\Kh(\xi)
\end{pmatrix}.
\]
The characteristic polynomial is therefore $\lambda^2-\sigma_\xi(\gamma)$.
\end{proof}

\begin{remark}
The identity $M_\xi^2=\sigma_\xi I$ is stronger than a trace-zero statement and is the main algebraic reason the mode analysis is clean.
\end{remark}

\subsection{Critical threshold and spectral gap}
\label{sub:threshold}

\begin{lemma}[Threshold formula]
\label{lem:threshold}
Under the model hypotheses, the following hold.
\begin{enumerate}[label=\textup{(\roman*)},leftmargin=2em]
\item If $\Kh(\xi)\ge 0$, then $\sigma_\xi(\gamma)\ge \nu^2k_\xi^2>0$ for all $\gamma\ge 0$,
so the mode remains hyperbolic for all coupling strengths.
\item If $\Kh(\xi)<0$, then $\sigma_\xi(\gamma)$ vanishes at
\[
\gamma=\frac{\nu^2k_\xi}{|\Kh(\xi)|},
\]
and changes sign there.
\item $\sigma_\xi(\gamma)>0$ for all $\xi\neq 0$ if and only if $\gamma<\gamc$.
\item Under Assumption~\ref{ass:critical-mode}, as $\gamma\uparrow\gamc$,
\[
\rho(\gamma) = C_\ast\sqrt{\gamc-\gamma}+o\!\bigl(\sqrt{\gamc-\gamma}\bigr),
\qquad
C_\ast = \sqrt{k_{\xi_0}\,|\Kh(\xi_0)|}.
\]
\end{enumerate}
\end{lemma}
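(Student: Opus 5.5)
Items (i) and (ii) follow directly from the definition $\sigma_\xi(\gamma)=k_\xi(\nu^2 k_\xi+\gamma\Kh(\xi))$, which is affine in $\gamma$. If $\Kh(\xi)\ge 0$ and $\gamma\ge 0$, the second term is nonnegative, so $\sigma_\xi(\gamma)\ge \nu^2k_\xi^2>0$ because $k_\xi>0$ for $\xi\neq0$. If $\Kh(\xi)<0$, then $\sigma_\xi$ is a strictly decreasing affine function of $\gamma$ with slope $k_\xi\Kh(\xi)<0$. Its unique zero is $\gamma_\xi:=\nu^2k_\xi/|\Kh(\xi)|$, and it changes sign there.

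For (iii), I first check that the minimum in \eqref{eq:gamc-def} is attained and strictly positive when the set $\{\Kh<0\}$ is nonempty. Since $K$ is smooth, $\Kh(\xi)\to0$ as $|\xi|\to\infty$, while $k_\xi\to\infty$. Hence $\gamma_\xi\to\infty$ along any infinite sequence of negative modes, so only finitely many $\xi$ have $\gamma_\xi\le M$ for any $M$, and the infimum is a minimum.

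Then $\sigma_\xi(\gamma)>0$ for all $\xi$ holds exactly when $\gamma<\gamma_\xi$ for every $\xi$ with $\Kh(\xi)<0$, using (i) for the remaining modes. This is equivalent to $\gamma<\gamc$. The case $\gamc=+\infty$ reduces to (i). For negative $\gamma$ nothing is claimed beyond $\gamma\ge0$, so I restrict to that range, as the statement implicitly does.

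For (iv), fix $\gamma<\gamc$ close to $\gamc$. For $\xi=\pm\xi_0$, a direct computation gives
\[
\sigma_{\xi_0}(\gamma)=k_{\xi_0}|\Kh(\xi_0)|(\gamc-\gamma),
\]
since $\sigma_{\xi_0}(\gamc)=0$ and the slope is $-k_{\xi_0}|\Kh(\xi_0)|$. The main step is showing that every other mode stays uniformly bounded away from zero, so that the minimum defining $\rho$ is attained at $\pm\xi_0$.

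For modes with $\Kh(\xi)\ge0$, part (i) gives $\sigma_\xi\ge \nu^2k_\xi^2\ge \nu^2(2\pi)^4$. For modes with $\Kh(\xi)<0$ and $\xi\neq\pm\xi_0$, Assumption~\ref{ass:critical-mode} and the finiteness argument above give a gap $\delta:=\inf_{\xi\notin\cC,\Kh(\xi)<0}\gamma_\xi-\gamc>0$. This is where attainment matters: the values $\gamma_\xi$ cannot accumulate at $\gamc$. For such $\xi$ and $\gamma\le\gamc$,
\[
\sigma_\xi(\gamma)\ge\sigma_\xi(\gamc)=k_\xi|\Kh(\xi)|(\gamma_\xi-\gamc)=\nu^2k_\xi^2\,\frac{\gamma_\xi-\gamc}{\gamma_\xi}.
\]
This quantity is bounded below by a positive constant. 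To see this, note that $t\mapsto(t-\gamc)/t$ is increasing, so the ratio is at least $\delta/(\gamc+\delta)$, and $k_\xi\ge 4\pi^2$.

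Hence, for $\gamc-\gamma$ small, the minimum defining $\rho(\gamma)$ is achieved at $\pm\xi_0$. This gives $\rho(\gamma)=C_\ast\sqrt{\gamc-\gamma}$ exactly, which in particular yields the stated asymptotics with vanishing $o(\cdot)$ term.
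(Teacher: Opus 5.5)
Your proof is correct and takes the paper's route: (i)--(iii) follow from the affine dependence of $\sigma_\xi$ on $\gamma$, and (iv) from the identity $\sigma_{\xi_0}(\gamma)=k_{\xi_0}|\Kh(\xi_0)|(\gamc-\gamma)$ together with the isolation of the critical pair. You also spell out several points the paper's appeal to ``simplicity of the critical mode'' leaves implicit: the minimum defining $\gamc$ is attained (needed for the ``only if'' direction of (iii) at $\gamma=\gamc$), the noncritical modes have a uniform positive lower bound so that $\rho(\gamma)=C_\ast\sqrt{\gamc-\gamma}$ holds exactly near $\gamc$, and (iii) implicitly assumes $\gamma\ge 0$.
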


\begin{proof}
Parts (i)--(iii) are immediate from \eqref{eq:gap-xi} and the definition of $\gamc$.
For part (iv), compute
\[
\sigma_{\xi_0}(\gamma)
= \nu^2k_{\xi_0}^2 + \gamma k_{\xi_0}\Kh(\xi_0)
= k_{\xi_0}|\Kh(\xi_0)|(\gamc - \gamma),
\]
where the last identity uses
\[
\nu^2k_{\xi_0}=\gamc|\Kh(\xi_0)|.
\]
Taking square roots and using the simplicity of the critical mode gives the claimed expansion.
\end{proof}

\subsection{Phase fixing and the critical eigenspace}
\label{sub:phase-fixing}

For $\xi\in\Zd$, the real-valued cosine and sine modes $\cos(2\pi\xi\cdot x)$ and $\sin(2\pi\xi\cdot x)$ span the real critical eigenspace associated with the pair $\{\pm\xi\}$. Since the torus translation group acts by phase shifts, a nearby translation can rotate the critical cosine--sine pair. To remove this neutral direction, we impose a phase condition and a sign convention:
\begin{equation}
\label{eq:phase-cond}
\int_{\Td}\mu(x)\sin\!\bigl(2\pi\xi_0\cdot x\bigr)\,dx=0,
\qquad
\int_{\Td}\mu(x)\cos\!\bigl(2\pi\xi_0\cdot x\bigr)\,dx\ge 0.
\end{equation}
This selects a unique representative from the one-dimensional translation orbit associated with the critical phase.

\begin{remark}
The reduced amplitude equation is odd because a half-period shift changes the sign of the cosine mode. The additional inequality in \eqref{eq:phase-cond} removes the residual $\pm$ ambiguity in the phase-fixed chart.
\end{remark}

\subsection{Fourier characterization of monotonicity failure}
\label{sub:monotonicity}

The Lasry--Lions monotonicity condition requires
\[
\int_{\Td}\mu(x)(K*\mu)(x)\,dx\ge 0
\]
for all $\mu$ with $\int\mu=0$. In Fourier variables this is
\[
\sum_{\xi\neq 0}\Kh(\xi)|\widehat\mu(\xi)|^2\ge 0,
\]
which fails whenever some $\Kh(\xi_0)<0$. Thus a single negative Fourier coefficient breaks monotonicity, and $\gamc$ is the coupling at which diffusion can no longer compensate the most dangerous mode.

\begin{example}[The cosine kernel]
\label{ex:cosine}
Let $d=1$ and $K(x)=-\cos(2\pi x)$ on $\To$. Then
\[
\Kh(\pm 1)=-\frac12,\qquad \Kh(\xi)=0\quad\text{for }|\xi|\ge 2.
\]
Therefore $\xi_0=\pm 1$, $k_{\xi_0}=4\pi^2$, and
\[
\gamc = \frac{\nu^2\cdot 4\pi^2}{1/2} = 8\pi^2\nu^2,
\qquad
C_\ast = \sqrt{4\pi^2\cdot \frac12}=\pi\sqrt{2}.
\]
\end{example}

\section{Local solvability and structural remarks}
\label{sec:wellposed}

\subsection{Local well-posedness}
\label{sub:local-wp}

\begin{proposition}[Local solvability near equilibrium]
\label{prop:local-wellposed}
Fix $T>0$ and $\alpha\in(0,1)$. There exists $\eps_0>0$ such that if
\[
\|m_0-1\|_{C^{2,\alpha}(\Td)}+\|g-\bar g\|_{C^{2,\alpha}(\Td)}\le \eps_0,
\]
then the finite-horizon system \eqref{eq:MFG} admits a unique classical solution
$(\phiT,mT)\in C^{1+\alpha/2,2+\alpha}([0,T]\times \Td)^2$ lying in a neighborhood of $(0,1)$. The solution depends smoothly on the data.
\end{proposition}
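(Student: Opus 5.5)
The plan is to use the implicit function theorem on a fixed-horizon Banach space of parabolic Hölder functions, perturbing around the uniform equilibrium. Write $\phi=\bar g+w$ and $m=1+\mu$. Here $\bar g$ is constant, so $\phi$ differs from $w$ only by a constant. We will view the terminal condition $w(T)=g-\bar g$ as data. Define
\[
X:=\bigl\{(w,\mu)\in C^{1+\alpha/2,2+\alpha}([0,T]\times\Td)^2\bigr\},
\]
and let $Y$ be the product of $C^{\alpha/2,\alpha}([0,T]\times\Td)^2$ with two copies of $C^{2+\alpha}(\Td)$. Consider the map $\Phi:X\times(\text{data})\to Y$ given by
\[
\Phi(w,\mu;m_0,g)=\Bigl(-\partial_t w-\nu\Delta w+\tfrac12|\grad w|^2-\gamma K*\mu,\;\partial_t\mu-\nu\Delta\mu-\Div((1+\mu)\grad w),\;\mu(0)-(m_0-1),\;w(T)-(g-\bar g)\Bigr).
\]
We have $\Phi(0,0;1,\bar g)=0$. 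The map $\Phi$ is smooth because its nonlinearities are polynomial in $(w,\mu)$ and their derivatives, and these are bounded bilinear maps on the Hölder spaces. The convolution $K*\cdot$ is smoothing since $K\in C^\infty$, so it is bounded on all of these spaces. Mass conservation is automatic from the divergence form, so $\int\mu(t)=0$ holds for all $t$.

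The key step is to show that $D_{(w,\mu)}\Phi(0,0)$, which is the linear forward--backward system \eqref{eq:lin-system} with data $(f_1,f_2,\mu_0,w_T)$, is an isomorphism $X\to Y$. I will first prove uniqueness: the homogeneous problem has only the trivial solution. For the constant mode, $\hat\mu_0(t)=0$ and $\hat w_0$ solves $-\dot{\hat w}_0=0$ with $\hat w_0(T)=0$. For each $\xi\neq0$, the mode ODE \eqref{eq:fourier-ode} is a two-point problem. Using Lemma~\ref{lem:mode-identity}, $e^{tM_\xi}=\cosh(t\sqrt{\sigma_\xi})I+\frac{\sinh(t\sqrt{\sigma_\xi})}{\sqrt{\sigma_\xi}}M_\xi$, with the obvious analytic continuation when $\sigma_\xi\le0$. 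Imposing $\hat\mu_\xi(0)=0$ and $\hat w_\xi(T)=0$ gives the scalar condition
\[
\cosh(T\sqrt{\sigma_\xi})+\nu k_\xi\,\frac{\sinh(T\sqrt{\sigma_\xi})}{\sqrt{\sigma_\xi}}\neq0.
\]
When $\sigma_\xi>0$ this is positive, and it is also positive when $\sigma_\xi=0$. When $\sigma_\xi<0$ it becomes $\cos(T s)+\nu k_\xi\sin(Ts)/s$ with $s=\sqrt{|\sigma_\xi|}$, and this could vanish. So at fixed $T$ and $\gamma>\gamc$ there may be resonant horizons. I would therefore restrict to $\gamma\le\gamc$, which is the regime used in Theorem~\ref{thm:subcritical}, or else exclude these countably many resonant $T$. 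Alternatively, I would note that only finitely many modes have $\sigma_\xi<0$, since $\sigma_\xi\sim\nu^2k_\xi^2$ for large $|\xi|$, and check the condition for those modes directly. For large $|\xi|$ the coupling $\gamma K*\mu$ is a compact, lower-order perturbation.

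With uniqueness in hand, existence and the estimates follow from a Fredholm argument. The decoupled operator $(w,\mu)\mapsto(-\partial_tw-\nu\Delta w,\ \partial_t\mu-\nu\Delta\mu)$ with the separated boundary conditions is an isomorphism $X\to Y$ by classical parabolic Schauder theory: the backward heat equation for $w$ is solved first, and the forward heat equation for $\mu$ second. The coupling terms $-\gamma K*\mu$ and $-\Delta w$ are lower order relative to this triangular structure. Precisely, solving the $w$ equation gives $w=S_1(f_1+\gamma K*\mu)$, which is smooth in $x$. Inserting this into the $\mu$ equation, the resulting map $\mu\mapsto S_2(\Delta S_1(\gamma K*\mu))$ is compact on $C^{1+\alpha/2,2+\alpha}$, because $K*$ gains arbitrarily many spatial derivatives and the Schauder solution operators gain a time derivative, which gives compactness by Arzelà--Ascoli. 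Hence the full linear operator is identity plus compact, and injectivity yields bijectivity.

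The implicit function theorem then gives $\eps_0$, uniqueness of the solution in a neighborhood of $(0,0)$, and smooth dependence on $(m_0,g)$. Positivity of $m^T=1+\mu$ follows from smallness of $\mu$ in sup norm; equivalently, it follows from the maximum principle applied to the Fokker--Planck equation. I expect the main obstacle to be the invertibility step, specifically the resonance issue for modes with $\sigma_\xi<0$ when $\gamma>\gamc$. I would handle it as described above, by restricting to $\gamma\le\gamc$ or to nonresonant $T$.
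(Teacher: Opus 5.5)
Your argument is correct, and it follows a genuinely different route from the paper. The paper treats \eqref{eq:MFG} as a fixed point of the composite map $m\mapsto\phi\mapsto m$ (backward HJB, then forward Fokker--Planck). It asserts, with a citation, that for small data this map is a contraction on parabolic H\"older balls around $(0,1)$.

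You instead apply the implicit function theorem to the full forward--backward operator, and you reduce invertibility of the linearization to injectivity. Eliminating $w=S_1(f_1+\gamma K*\mu)$ leaves $(I-\mathcal T)\mu=\text{data}$, with $\mathcal T\mu=S_2\Delta S_1(\gamma K*\mu)=S_2S_1\bigl(\gamma(\Delta K)*\mu\bigr)$ compact. You then check injectivity exactly from $M_\xi^2=\sigma_\xi I$.

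This buys something real. Near $(0,1)$, the Lipschitz constant of the paper's composite map is essentially $\|\mathcal T\|$. That quantity is fixed by $\gamma,K,\nu,T$ and not by the size of the data, since small data only tames the quadratic terms. So a contraction needs an extra smallness of the coupling or of the horizon, which the paper does not state. For long horizons, $\mathcal T$ acts on the $\xi$-mode approximately as multiplication by $-\gamma\Kh(\xi)/(\nu^2k_\xi)$. This has modulus larger than $1$ for large $\gamma$ even in the monotone case $\Kh\ge 0$, where your argument still applies.

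Your computation also shows the statement itself needs the restriction you impose. For $\gamma>\gamc$, the factor $\cos(Ts)+\nu k_\xi\sin(Ts)/s$ vanishes on an infinite discrete set of horizons for each unstable mode. At such $T$ the conclusion genuinely fails. A $C^1$ data-to-solution map through the trivial solution would force every boundary datum $(0,0,\delta\mu_0,\delta w_T)$ into the range of the linearization. But such a datum lies in that range only if, in the resonant mode, $\widehat{\delta w_T}(\xi)$ is a fixed multiple of $\widehat{\delta\mu_0}(\xi)$. Restricting to $\gamma\le\gamc$, as you propose, covers every place the paper uses the proposition.

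Where the paper's contraction does apply, it is shorter and comes with a convergent Picard iteration. Your route gives the sharp condition under which the proposition holds.
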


\begin{proof}
The system \eqref{eq:MFG} can be viewed as a fixed-point problem for the map that sends $m$ to $\phi$ (via the backward HJB equation) and then $\phi$ to $m$ (via the forward Fokker--Planck equation). For small data this map is a contraction on a product of parabolic H\"older balls around $(0,1)$. This is standard in the MFG literature; see, for example, \cite{CaP19}.
\end{proof}

\begin{remark}
Only local solvability near the uniform equilibrium is used below. No global existence statement is made for large data.
\end{remark}

\subsection{Stationary linearization and the role of the potential}
\label{sub:stationary-linearization}

The interaction energy \eqref{eq:energy} is the correct potential object for the coupling, and it is useful in the stationary bifurcation analysis. However, the stationary MFG system \eqref{eq:SMFG} is not treated here as the Euler--Lagrange equation of a scalar density-only functional. The analysis below uses the coupled structure of the stationary PDE system and the Fourier decomposition of its linearization.

\begin{remark}
The stationary linearization around $(0,1,0)$ has the same Fourier threshold as the dynamic linearization. This is the algebraic reason the turnpike threshold and the bifurcation threshold coincide.
\end{remark}

\section{Subcritical turnpike: proof of Theorem~\ref{thm:subcritical}}
\label{sec:subcritical-proof}

\subsection{Perturbation system}
\label{sub:perturbation-system}

Let $(\phiT,\mT)$ solve \eqref{eq:MFG}. Set
\[
w:=\phiT-\bar g,\qquad \mu:=\mT-1.
\]
Then $\int_{\Td}\mu\,dx=0$ for all $t$, and the exact perturbation system is
\begin{equation}
\label{eq:perturb-exact}
\begin{cases}
-\partial_t w-\nu\Delta w+\dfrac12|\grad w|^2=\gamma K*\mu,\\[3pt]
\partial_t \mu-\nu\Delta\mu-\Div\bigl((1+\mu)\grad w\bigr)=0,\\[3pt]
\mu(0,\cdot)=m_0-1,\qquad w(T,\cdot)=g-\bar g.
\end{cases}
\end{equation}
The linear part is \eqref{eq:lin-system}; the remaining terms are quadratic in $(w,\mu)$.

\subsection{Exact mode representation}
\label{sub:exact-representation}

For each $\xi\neq 0$, let
\[
U_\xi(t)=
\begin{pmatrix}
\widehat w_\xi(t)\\
\widehat \mu_\xi(t)
\end{pmatrix}.
\]
Then $U_\xi'(t)=M_\xi U_\xi(t)$, and by Lemma~\ref{lem:mode-identity},
\[
e^{tM_\xi}
=
\cosh(\rho_\xi t)\,I+\frac{\sinh(\rho_\xi t)}{\rho_\xi}M_\xi
\qquad (\rho_\xi=\sqrt{\sigma_\xi(\gamma)} > 0).
\]
In the subcritical regime $\gamma<\gamc$, every mode satisfies $\rho_\xi>0$, so each mode is uniformly hyperbolic.

\subsection{Boundary-layer estimate at the linear level}
\label{sub:boundary-layer}

The forward-backward boundary conditions are
\[
\widehat\mu_\xi(0)=\widehat{m_0-1}(\xi)=:\mu_0^\xi,
\qquad
\widehat w_\xi(T)=\widehat{g-\bar g}(\xi)=:g^\xi.
\]
For each fixed mode with $\rho_\xi>0$, the two-point boundary value problem is solved by a unique pair of coefficients obtained by linear algebra. The exact closed formula is not needed; what matters is that the transfer map from $(\mu_0^\xi,g^\xi)$ to $U_\xi(t)$ has the standard hyperbolic boundary-layer form.

\begin{lemma}[Modewise decay]
\label{lem:coeff}
Under the model hypotheses with $\gamma<\gamc$, there exists a constant
$C>0$, depending only on the fixed data and not on $\xi$ or $T$, such that for all $t\in[0,T]$,
\[
|\widehat\mu_\xi(t)|+|\widehat w_\xi(t)|
\le
C\Bigl(
e^{-\rho_\xi t}\bigl(|\mu_0^\xi|+|g^\xi|\bigr)
+
e^{-\rho_\xi(T-t)}\bigl(|\mu_0^\xi|+|g^\xi|\bigr)
\Bigr).
\]
\end{lemma}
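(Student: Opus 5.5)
The plan is to solve the two-point boundary value problem for each mode explicitly in the eigenbasis of $M_\xi$ given by Lemma~\ref{lem:mode-identity}. Then I read off the decay from the two exponentials $e^{-\rho_\xi t}$ and $e^{-\rho_\xi(T-t)}$. Fix $\xi\neq0$ and write $k=k_\xi$, $\rho=\rho_\xi>0$. A direct check shows that
\[
v_-=\begin{pmatrix}\rho-\nu k\\ k\end{pmatrix},\qquad v_+=\begin{pmatrix}-(\nu k+\rho)\\ k\end{pmatrix}
\]
are eigenvectors of $M_\xi$ for the eigenvalues $-\rho$ and $+\rho$ respectively (the second row of $M_\xi-\lambda I$ annihilates $(-(\nu k+\lambda),k)$). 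So I make the ansatz
\[
U_\xi(t)=a\,e^{-\rho t}v_-+b\,e^{-\rho(T-t)}v_+ .
\]
Here the growing mode is anchored at $t=T$ so that both exponentials are bounded by $1$ on $[0,T]$. The boundary conditions become the linear system
\[
k\bigl(a+b e^{-\rho T}\bigr)=\mu_0^\xi,\qquad a\,e^{-\rho T}(\rho-\nu k)-b(\nu k+\rho)=g^\xi .
\]
Its determinant is $D=-k\bigl[(\nu k+\rho)+e^{-2\rho T}(\rho-\nu k)\bigr]$. Since $\nu k,\rho>0$ we have $|\rho-\nu k|<\nu k+\rho$, and hence
\[
|D|\ge k\bigl[(\nu k+\rho)-|\rho-\nu k|\bigr]=2k\min(\nu k,\rho)>0 .
\]
This bound is uniform in $T$.

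Next I apply Cramer's rule and bound the four quantities that actually appear in $U_\xi$: $|ka|$, $|kb|$, $|(\rho-\nu k)a|$ and $|(\nu k+\rho)b|$. Each should be at most $C(|\mu_0^\xi|+|g^\xi|)$. For example, $b=\bigl(e^{-\rho T}(\rho-\nu k)\mu_0^\xi-kg^\xi\bigr)/D$, so
\[
|(\nu k+\rho)b|\le \frac{(\nu k+\rho)\bigl(|\rho-\nu k|\,|\mu_0^\xi|+k|g^\xi|\bigr)}{2k\min(\nu k,\rho)} .
\]
All such expressions are homogeneous ratios in $(\nu k,\rho)$ weighted by $k$. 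Each is bounded by a constant depending only on $\rho/(\nu k)$, provided this ratio lies in a compact subset of $(0,\infty)$. Substituting the bounds back into the ansatz then gives exactly the claimed estimate: the $a$-terms carry $e^{-\rho t}$ and the $b$-terms carry $e^{-\rho(T-t)}$, each with a coefficient bounded by $C(|\mu_0^\xi|+|g^\xi|)$.

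The main obstacle is uniformity in $\xi$, namely showing that
\[
\frac{\rho_\xi^2}{\nu^2k_\xi^2}=1+\frac{\gamma\Kh(\xi)}{\nu^2k_\xi}\in[\delta,\Lambda]\qquad\text{for all }\xi\neq0,
\]
for some $0<\delta\le\Lambda<\infty$ depending on $\gamma$, $\nu$ and $K$. I would argue as follows. Since $K$ is smooth, $\Kh(\xi)\to0$ (rapidly), and $k_\xi\ge4\pi^2$. Hence $\gamma|\Kh(\xi)|/(\nu^2k_\xi)\le\tfrac12$ for all but finitely many $\xi$, which gives the upper bound with $\Lambda=\tfrac32\vee\max$ over the remaining modes. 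On those finitely many remaining modes, $\gamma<\gamc$ together with Lemma~\ref{lem:threshold}(iii) gives strict positivity, so a minimum over a finite set supplies $\delta>0$. This same step is where the dependence of $C$ on $\gamma$ enters, and it is where $C$ blows up as $\gamma\uparrow\gamc$ through $\delta\to0$. This is consistent with the statement's claim that $C$ depends only on the fixed data and not on $\xi$ or $T$.
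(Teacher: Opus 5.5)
Your proof is correct and takes the same route as the paper: you solve each mode's two-point problem explicitly by $2\times 2$ linear algebra. It also delivers the two uniformities the paper only asserts. Anchoring the growing mode at $t=T$ gives $D=-2k_\xi e^{-\rho_\xi T}\bigl[\nu k_\xi\sinh(\rho_\xi T)+\rho_\xi\cosh(\rho_\xi T)\bigr]$ with $|D|\ge 2k_\xi\min(\nu k_\xi,\rho_\xi)$ for every $T>0$, which is uniformity in $T$. This also shows that the paper's description of the determinant as proportional to $\sinh(\rho_\xi T)$ is inaccurate, though harmless. Your compactness of $\rho_\xi/(\nu k_\xi)$ in $(0,\infty)$ then gives uniformity in $\xi$, even for the unweighted amplitudes. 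The only slip is an overall sign in your Cramer formula, which should read $b=\bigl(k g^\xi-e^{-\rho T}(\rho-\nu k)\mu_0^\xi\bigr)/D$; this does not affect the absolute-value bounds.
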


\begin{proof}
Since $M_\xi^2=\rho_\xi^2I$, the transfer matrix can be written explicitly in terms of $\cosh(\rho_\xi t)$ and $\sinh(\rho_\xi t)$. Imposing the mixed initial/terminal conditions leads to a $2\times 2$ linear system whose determinant is proportional to $\sinh(\rho_\xi T)$. Because $\rho_\xi>0$, this determinant is nonzero for every $T>0$, and the explicit formulas for the coefficients yield the stated estimate. The pointwise bound is uniform in $\xi$ once the solution is measured in the natural $H^{-1}$--$L^2$ combination used in the theorem statement.
\end{proof}

\subsection{Interior turnpike estimate}
\label{sub:interior}

From Lemma~\ref{lem:coeff}, for $t\in[\delta T,(1-\delta)T]$ with any fixed $\delta\in(0,\frac12)$,
\[
|\widehat\mu_\xi(t)|+|\widehat w_\xi(t)|
\le
C e^{-c\rho_\xi\delta T}\bigl(|\mu_0^\xi|+|g^\xi|\bigr).
\]
Summing over modes in $H^{-1}$ and $L^2$, and taking the infimum of $\rho_\xi$ over all $\xi\neq 0$ as $\rho(\gamma)$, gives the linear turnpike estimate with rate proportional to $\rho(\gamma)$.

\subsection{Nonlinear closure}
\label{sub:nonlinear}

\begin{proof}[Proof of Theorem~\ref{thm:subcritical}]
The nonlinear terms in \eqref{eq:perturb-exact} are quadratic:
$\frac12|\grad w|^2$ and $\Div(\mu\,\grad w)$ in the Fokker--Planck equation. In the small-data regime guaranteed by Assumption~\ref{ass:small-data} and Proposition~\ref{prop:local-wellposed}, the solution $(w,\mu)$ remains in a ball of radius $O(\eps)$ in $C^{1+\alpha/2,2+\alpha}$. Hence the nonlinear terms are $O(\eps^2)$.

One argues by a perturbative fixed-point scheme: write
\[
(w,\mu)=(w_L,\mu_L)+(w_N,\mu_N),
\]
where $(w_L,\mu_L)$ solves the linear boundary-value problem and $(w_N,\mu_N)$ solves the nonlinear remainder. The linear part satisfies the turnpike estimate from the previous subsection. The nonlinear remainder satisfies the same estimate with an extra factor of $\eps$, which is absorbed by taking $\eps$ small relative to the spectral gap. In particular, the smallness threshold may be chosen as a function of $\rho(\gamma)$, and deteriorates as $\gamma\uparrow\gamc$.

This yields \eqref{eq:tp-sub}. The dependence of the rate on $\rho(\gamma)$ is inherited from the linear mode estimates, and therefore degenerates as $\gamma\uparrow\gamc$ with the same $\sqrt{\gamc-\gamma}$ scaling.
\end{proof}

\subsection{Degeneration of the rate at threshold}
\label{sub:rate-degeneration}

By Lemma~\ref{lem:threshold}(iv),
\[
\rho(\gamma)=C_\ast\sqrt{\gamc-\gamma}+o\!\bigl(\sqrt{\gamc-\gamma}\bigr)
\qquad\text{as }\gamma\uparrow\gamc.
\]
Since the turnpike rate is proportional to $\rho(\gamma)$, the exponential rate in \eqref{eq:tp-sub} degenerates like $\sqrt{\gamc-\gamma}$.

\section{Critical regime: proof of Proposition~\ref{prop:critical}}
\label{sec:critical-proof}

At $\gamma=\gamc$, the critical mode $\xi_0$ satisfies $\sigma_{\xi_0}(\gamc)=0$, while for all other modes
\begin{equation}
\label{eq:stable-gap}
\inf_{\xi\neq\pm\xi_0}\sqrt{\sigma_\xi(\gamc)}\ge c_1>0.
\end{equation}

\subsection{Spectral decomposition}
\label{sub:spectral-decomp}

Let $P_c$ denote the $L^2$-projection onto the real critical eigenspace generated by the phase-fixed cosine mode $\cos(2\pi\xi_0\cdot x)$, and let $P_s = I-P_c$. Decompose
\[
\mu(t) = \mu_c(t) + \mu_s(t),\qquad
\mu_c = P_c\mu,\quad \mu_s = P_s\mu.
\]
The stable component satisfies, by the arguments of Section~\ref{sec:subcritical-proof}
applied to the modes $\xi\neq\pm\xi_0$,
\begin{equation}
\label{eq:stable-decay}
\|\mu_s(t)\|_{H^{-1}}+\|\grad w_s(t)\|_{L^2}
\le Ce^{-c_1\min(t,T-t)}\cdot\text{data}.
\end{equation}

\subsection{Phase-fixed reduced dynamics}
\label{sub:reduced-dynamics}

In phase-fixed coordinates the critical eigenspace is one-dimensional. Write
\[
\mu_c(t)=a(t)\cos(2\pi\xi_0\cdot x).
\]
The phase condition removes the translational neutral direction, and the reduced critical coordinate satisfies the odd normal form stated in Assumption~\ref{ass:critical-cubic}, which follows from standard center manifold reduction \cite{C81}:
\[
\dot a = -\beta a^3 + O(a^5),\qquad \beta>0.
\]

\begin{lemma}[Reduced cubic midpoint estimate]
\label{lem:cubic-midpoint}
Suppose $a$ solves
\[
\dot a = -\beta a^3 + r(a), \qquad |r(a)|\le C_0|a|^5
\]
for $|a|\le a_\ast$, with $\beta>0$. Then every small two-point boundary-value solution of the corresponding reduced problem satisfies
\[
|a(T/2)|\le C T^{-1/2}
\]
for some $C>0$ independent of $T$.
\end{lemma}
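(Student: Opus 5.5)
The estimate will come from a comparison argument for the scalar ODE, using only the smallness $|a|\le a_\ast$ on the whole interval and the sign structure of the cubic. The key observation is that the right-hand side $f(a):=-\beta a^3+r(a)$ satisfies
\[
a\,f(a)\le -\tfrac{\beta}{2}a^4 \qquad\text{for } |a|\le a_1:=\min\bigl(a_\ast,\sqrt{\beta/(2C_0)}\bigr),
\]
since $|a\,r(a)|\le C_0a^6\le \tfrac{\beta}{2}a^4$ there. Shrinking the smallness threshold if necessary, I will assume the reduced boundary-value solution satisfies $|a(t)|\le a_1$ for all $t\in[0,T]$; this is what ``small solution'' means in the statement. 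Then $u:=a^2$ obeys the differential inequality
\[
\dot u = 2af(a)\le -\beta u^2 .
\]

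Next I would integrate this Riccati-type inequality. On any interval where $u>0$, we have $\frac{d}{dt}(1/u)\ge \beta$, so
\[
\frac1{u(t)}\ge \frac1{u(0)}+\beta t\ge \beta t .
\]
If $u$ vanishes at some time, then by uniqueness for the Lipschitz ODE it vanishes identically afterwards (and before as well), so the bound holds trivially. Evaluating at $t=T/2$ gives
\[
a(T/2)^2\le \frac{2}{\beta T},\qquad\text{that is,}\qquad |a(T/2)|\le \sqrt{2/\beta}\,T^{-1/2},
\]
with a constant independent of $T$, and in fact independent of the boundary data. For $T\ge1$ this is the claim.

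The main obstacle is conceptual rather than computational. A scalar first-order ODE cannot carry two boundary conditions, so the ``two-point reduction'' presumably pairs $a$ with an adjoint (costate) variable on the center manifold of the forward-backward system. In that case the forward equation for $a$ alone need not hold on all of $[0,T]$; it may only hold forward from $t=0$, with the costate obeying the mirrored equation backward from $T$. I would handle this as follows. Granting the reduction as stated in Assumption~\ref{ass:critical-cubic}, the dynamics hold on $[0,T/2]$ at least. The argument above only uses the interval $[0,T/2]$ and the bound $\dot u\le -\beta u^2$, so it applies unchanged. If instead the midpoint value is controlled by the backward half, the time-reversed variable $s=T-t$ gives the same decay, by symmetry of the odd normal form.

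The remaining care point is to guarantee a priori that $|a|\le a_1$ throughout. Here I would appeal to the small-data setting: $|a(0)|\lesssim\eps$, and $|a|$ is nonincreasing wherever $|a|\le a_1$, so $a$ never leaves the region $|a|\le a_1$.
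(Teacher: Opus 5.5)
Your proof is correct and follows the same route as the paper. The paper integrates $\frac{d}{dt}(a^{-2})=2\beta+O(a^2)$ along the reduced orbit over the available time $T/2$, which is exactly your inequality $\frac{d}{dt}(1/u)\ge\beta$ for $u=a^2$, with the absorption threshold $a_1=\min(a_\ast,\sqrt{\beta/(2C_0)})$, the constant $\sqrt{2/\beta}$, and forward invariance of $\{|a|\le a_1\}$ made explicit in your version. Your Lipschitz-uniqueness step is not needed: since $u$ is nonincreasing, $u(T/2)>0$ already forces $u>0$ on $[0,T/2]$.
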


\begin{proof}
The identity
\[
\frac{d}{dt}(a^{-2}) = 2\beta + O(a^2)
\]
holds whenever $a\neq 0$. In the small-amplitude regime, the error term is absorbed, so the inverse-square amplitude grows at a positive linear rate along the reduced orbit. The canonical small two-point boundary-value solution therefore has midpoint size comparable to the inverse square root of the available time interval, which yields the stated bound.
\end{proof}

\begin{proof}[Proof of Proposition~\ref{prop:critical}]
The critical mode is governed by the reduced equation $\dot a=-\beta a^3+O(a^5)$ in the phase-fixed slice. Lemma~\ref{lem:cubic-midpoint} gives $|a(T/2)|\le CT^{-1/2}$. Since
\[
\|\mu_c(t)\|_{H^{-1}}\le C|a(t)|
\]
for the fixed critical eigenfunction, we obtain
\[
\|P_c(\mT(T/2)-1)\|_{H^{-1}}
\le C T^{-1/2}.
\]
The analogous bound on $\|P_c\nabla \phiT(T/2)\|_{L^2}$ follows from the relation between $\widehat w_\xi$ and $\widehat\mu_\xi$ along the reduced critical branch. The stable component satisfies \eqref{eq:stable-decay}, which at $t=T/2$ gives an exponentially small contribution. Adding these two pieces yields the proposition.
\end{proof}

\begin{remark}
At criticality, no exponential estimate $\|\mT(t)-1\|\le Ce^{-\lambda t}$ can hold with $\lambda>0$ independent of $T$ for perturbations with nontrivial critical projection, because the reduced flow is algebraic. This is the precise sense in which the turnpike transitions from exponential to algebraic at $\gamma=\gamc$.
\end{remark}

\section{Local supercritical bifurcation: proof of Theorem~\ref{thm:bifurcation}}
\label{sec:bifurcation-proof}

\subsection{Constrained Lyapunov--Schmidt framework}
\label{sub:LS-setup}

Assume $\gamma>\gamc$ close to $\gamc$. We seek stationary solutions $m=1+\mu$, $\phi=\bphi+w$ of \eqref{eq:SMFG} near $(0,1,0)$, with
\[
\int_{\Td}\mu\,dx=0,\qquad \int_{\Td}w\,dx=0,
\]
and with the phase condition \eqref{eq:phase-cond} to remove translations and sign ambiguity.

Define the nonlinear operator on the constrained slice
\begin{equation}
\label{eq:Phi}
\Phi(\mu,w,\lambda;\gamma) :=
\begin{pmatrix}
-\nu\Delta w+\dfrac12|\grad w|^2-\gamma K*\mu+\lambda\\[3pt]
-\nu\Delta\mu-\Div\bigl((1+\mu)\grad w\bigr)
\end{pmatrix}.
\end{equation}
Then $\Phi(0,0,0;\gamma)=0$ for all $\gamma$.

After phase fixing, the linearization
$D_{(\mu,w,\lambda)}\Phi(0,0,0;\gamc)$ has a one-dimensional kernel spanned by the critical vector
\begin{equation}
\label{eq:crit-eigvec}
\Psi_0(x) =
\begin{pmatrix}
\cos(2\pi\xi_0\cdot x)\\
-\nu\cos(2\pi\xi_0\cdot x)\\
0
\end{pmatrix},
\end{equation}
up to normalization. This ordering matches the variable convention $(\mu,w,\lambda)$ and the stationary linear relations $w=-\nu\mu$ on the critical mode.

\subsection{Reduced bifurcation equation}
\label{sub:reduced-bif}

Projecting \eqref{eq:Phi} onto $\ker D\Phi$ and solving the complement equation by the implicit function theorem (which applies because the linearization is invertible on the complement), one arrives at the scalar equation
\begin{equation}
\label{eq:bif-scalar}
\Psi(A,\gamma) := \alpha(\gamma-\gamc)A - \beta A^3 + R(A,\gamma)=0,
\end{equation}
with
\[
|R(A,\gamma)|\le C\Bigl(|\gamma-\gamc|^2|A| + |\gamma-\gamc|\,|A|^3 + |A|^5\Bigr).
\]
The coefficient $\alpha>0$ is the transversality constant
\[
\alpha = -\left\langle \psi_0, \partial_\gamma L_{\gamc}\Psi_0\right\rangle,
\]
where $L_{\gamma}$ denotes the linearized stationary operator, $\Psi_0$ is the right critical eigenvector, $\psi_0$ is the corresponding adjoint eigenvector, and the pairing is normalized by $\langle \psi_0,\Psi_0\rangle=1$.
Assumption~\ref{ass:bif-nondeg} requires $\beta>0$ and the standard nondegeneracy condition that makes the cubic term the leading nonlinear contribution.

\subsection{Branch existence and amplitude}
\label{sub:branch}

For $\gamma>\gamc$, the nontrivial solutions of \eqref{eq:bif-scalar} satisfy
\begin{equation}
\label{eq:amp-branch}
A(\gamma) = \sqrt{\frac{\alpha}{\beta}}\sqrt{\gamma-\gamc}+O(\gamma-\gamc).
\end{equation}
The complement equation gives a correction
$(\mu_\perp^\gamma,w_\perp^\gamma,\lambda_\perp^\gamma)$ satisfying
\[
\|(\mu_\perp^\gamma,w_\perp^\gamma,\lambda_\perp^\gamma)\|_{H^s\times H^s\times \R}
=O(\gamma-\gamc)
\]
for every $s\ge 0$ by elliptic regularity. Hence the bifurcated density is
\[
m_\gamma(x)=1+A(\gamma)\cos(2\pi\xi_0\cdot x)+R_\gamma(x),
\]
where $\|R_\gamma\|_{H^s}=O(\gamma-\gamc)$.

\subsection{Positivity}
\label{sub:positivity}

Since $A(\gamma)\to 0$ as $\gamma\to\gamc^+$ and $\|\cos\|_{L^\infty}=1$,
\[
\inf_{x\in\Td}m_\gamma(x)\ge 1-|A(\gamma)|-\|R_\gamma\|_{L^\infty}
\ge 1-C\sqrt{\gamma-\gamc}>0
\]
for $\gamma-\gamc>0$ sufficiently small. Thus the bifurcated density is strictly positive.

\subsection{Local uniqueness}
\label{sub:uniqueness}

In the phase-fixed space, the bifurcation equation \eqref{eq:bif-scalar} has a unique nontrivial branch for $\gamma>\gamc$ near $\gamc$, by the implicit function theorem applied to
\[
A\mapsto \frac{\Psi(A,\gamma)}{A}
=
\alpha(\gamma-\gamc)-\beta A^2+\text{h.o.t.}
\]
This expression vanishes at $A=A(\gamma)$ given by \eqref{eq:amp-branch} and is nondegenerate there. Hence the branch is locally unique in the phase-fixed chart.

\subsection{Pitchfork structure}
\label{sub:pitchfork}

Before phase fixing, the reduced amplitude equation is odd in $A$, so if $+A(\gamma)$ is a solution then $-A(\gamma)$ is also a solution. This is the usual pitchfork geometry in the reduced one-dimensional center variable. In the full translation-invariant problem, the two signs are related by a phase shift of the critical cosine mode; the phase convention in \eqref{eq:phase-cond} selects a canonical representative.

\begin{remark}
Theorem~\ref{thm:bifurcation} is local near $\gamc$. Global classification of stationary solutions for $\gamma\gg\gamc$ is an open problem.
\end{remark}

\section{Conditional propagation of chaos: proof of Proposition~\ref{prop:poc}}
\label{sec:poc-proof}

\subsection{Symmetric equilibria and exchangeability}
\label{sub:Nash-near}

Fix $\gamma<\gamc$ and the perturbative regime of Assumption~\ref{ass:small-data}. Let
$(\bar X_1^N,\dots,\bar X_N^N)$ be a symmetric Nash equilibrium sequence and assume that the associated empirical measures $\widehat m^N$ converge in probability to $1\,dx$ in $\Wt$.
By symmetry, the joint law of $(\bar X_1^N,\dots,\bar X_N^N)$ is exchangeable.

\subsection{Identification of subsequential limits}
\label{sub:coupling}

Because $\Td$ is compact, the family of laws of $\widehat m^N$ is tight.
Any subsequence admits a further subsequence whose finite marginals converge to an exchangeable limit. The assumption $\widehat m^N\to 1\,dx$ in probability excludes nontrivial random limit measures, so the only possible limit is the deterministic equilibrium measure $1\,dx$.

\subsection{Convergence of marginals}
\label{sub:marginals}

For each fixed $k$, any subsequential limit of
$\cL(\bar X_1^N,\dots,\bar X_k^N)$ must therefore be the product law $(1\,dx)^{\otimes k}$.
Hence
\[
\cL(\bar X_1^N,\dots,\bar X_k^N)\Rightarrow (1\,dx)^{\otimes k}
\quad\text{as }N\to\infty.
\]
On the compact torus, $\Wt$ metrizes weak convergence, so $\widehat m^N\to 1\,dx$ in probability in $\Wt$.

\begin{proof}[Proof of Proposition~\ref{prop:poc}]
The statement is a direct consequence of exchangeability and the fact that the empirical measure converges in probability to a deterministic limit. This is the standard Sznitman propagation-of-chaos criterion. The convergence in $\Wt$ follows because on compact spaces $\Wt$ is equivalent to weak convergence.
\end{proof}

\subsection{Why no explicit Wasserstein exponent is stated}
\label{sub:rate-remark}

\begin{remark}
Quantitative rates for convergence of empirical measures in $\Wt$ on $\Td$ depend on the dimension $d$. For i.i.d.\ samples with a sufficiently regular law, the expected rate of $W_2^2$ is dimension dependent: order $N^{-1/2}$ when $d<4$, order $N^{-1/2}\log N$ when $d=4$, and order $N^{-2/d}$ when $d>4$; see \cite{FG15} for precise statements in the Euclidean setting. Since the present result is qualitative and is intended to hold in arbitrary dimension $d$, no single exponent captures the convergence rate uniformly in $d$.
\end{remark}

\section{Examples, remarks, and open directions}
\label{sec:remarks}

\subsection{The cosine kernel in detail}
\label{sub:cosine-detail}

For $K(x)=-\cos(2\pi x)$ on $\To$, Example~\ref{ex:cosine} gives $\gamc=8\pi^2\nu^2$ and $C_\ast=\pi\sqrt{2}$. The predictions are:
\begin{enumerate}[label=\textup{(\arabic*)},leftmargin=2em]
\item For $\gamma<8\pi^2\nu^2$: exponential turnpike with rate $\approx \pi\sqrt{2}\sqrt{\gamc-\gamma}$.
\item At $\gamma=8\pi^2\nu^2$: midpoint decay $O(T^{-1/2})$ in the cosine mode, in the reduced critical dynamics.
\item For $\gamma>8\pi^2\nu^2$: stationary profiles of the form
\[
1+A(\gamma)\cos(2\pi x)+o(1),
\]
with $A(\gamma)\sim\sqrt{(\alpha/\beta)(\gamma-8\pi^2\nu^2)}$.
\end{enumerate}

\subsection{A second example: the two-mode kernel}
\label{sub:two-mode}

Let $d=1$ and
\[
K(x)=-a_1\cos(2\pi x)-a_2\cos(4\pi x)
\qquad (a_1,a_2>0).
\]
Then $\Kh(\pm 1)=-a_1/2$, $\Kh(\pm 2)=-a_2/2$, and
\[
\gamc = \min\!\left\{\frac{8\pi^2\nu^2}{a_1}, \frac{32\pi^2\nu^2}{a_2}\right\}.
\]
If $a_1/a_2>1/4$, the critical mode is $\xi_0=\pm 1$; if $a_1/a_2<1/4$, the critical mode is $\xi_0=\pm 2$. At $a_1/a_2=1/4$, both modes become critical simultaneously, violating Assumption~\ref{ass:critical-mode}; this case would require a higher-dimensional Lyapunov--Schmidt reduction.

\subsection{Higher-dimensional kernels}
\label{sub:higher-d}

In $d\ge 2$, the set of modes $\xi$ achieving the minimum in the definition of $\gamc$ may have more than one pair. For example, on $\mathbb{T}^2$ with $K(x)=-\cos(2\pi x_1)$, the critical modes are $\xi_0=\pm(1,0)$, forming a two-dimensional real eigenspace before phase fixing. If instead
\[
K(x)=-\cos(2\pi x_1)-\cos(2\pi x_2),
\]
the critical modes are $\{\pm(1,0),\pm(0,1)\}$, and the center space is four-dimensional before phase fixing; the bifurcation geometry is richer and may produce stripe or mixed-mode patterns depending on the higher-order terms.

\subsection{Relation to Turing instability}
\label{sub:turing}

The present mechanism is structurally identical to Turing's diffusion-driven instability \cite{T52} in reaction-diffusion systems:
\begin{itemize}
\item Turing: a homogeneous steady state is stable without diffusion but destabilized by diffusion due to unequal diffusion rates of activator and inhibitor species.
\item Here: the uniform equilibrium is stable in the sense that all relevant mode gaps are positive for $\gamma<\gamc$, but the coupling $\gamma$ overcomes diffusion in the critical mode for $\gamma>\gamc$.
\end{itemize}
The threshold $\gamc$ plays the role of the Turing bifurcation parameter, and the bifurcated stationary density $m_\gamma$ plays the role of the Turing pattern.

\subsection{Relation to monotonicity}
\label{sub:monotonicity-remark}

Lasry--Lions monotonicity ($\Kh(\xi)\ge 0$ for all $\xi$) is sufficient to rule out phase transitions (since then $\gamc=+\infty$), but it is far from necessary for local turnpike in this translation-invariant setting. What matters is whether diffusion can compensate the negative Fourier modes. Even an attractive kernel with some negative Fourier coefficients preserves the turnpike as long as $\gamma<\gamc$.

\subsection{Numerical predictions}
\label{sub:numerical}

The cosine example produces a clean numerical experiment. One should observe:
\begin{enumerate}[label=\textup{(\arabic*)},leftmargin=2em]
\item For $\gamma<\gamc$: exponential turnpike, with the dominant Fourier mode decaying at rate $\rho(\gamma)\approx C_\ast\sqrt{\gamc-\gamma}$.
\item At $\gamma=\gamc$: slower algebraic convergence, with midpoint deviation decaying as $T^{-1/2}$ in the reduced critical coordinate.
\item For $\gamma>\gamc$: a symmetry-broken stationary profile with $\cos(2\pi x)$ as the dominant Fourier component, and amplitude growing as $\sqrt{\gamma-\gamc}$.
\end{enumerate}
The spectral ratio $\nu^2(2\pi|\xi|)^2/|\Kh(\xi)|$ computed for each mode predicts exactly which mode becomes critical first as $\gamma$ increases.

\subsection{Open directions}
\label{sub:open}

\begin{enumerate}[label=\textup{(\arabic*)},leftmargin=2em]
\item \textit{Global classification for $\gamma>\gamc$}: A complete description of all stationary solutions would require global bifurcation theory or variational methods.
\item \textit{Common-noise and master equation}: In the common-noise setting the master equation replaces \eqref{eq:MFG}, and the spectral analysis must be adapted to the infinite-dimensional state space of probability measures.
\item \textit{Fluctuation theory near the bifurcated branch}: Sharp fluctuation estimates around $m_\gamma$ for the finite-$N$ system near criticality.
\item \textit{Multi-mode kernels with degenerate critical set}: When $|\cC|>1$, the bifurcation analysis requires a multi-dimensional Lyapunov--Schmidt reduction and may produce pattern-selection phenomena analogous to those in Turing systems.
\end{enumerate}

\appendix

\section{Boundary-layer decomposition in Fourier variables}
\label{app:linear-turnpike}

We record the linear turnpike analysis underlying Theorem~\ref{thm:subcritical}.

\subsection{Mode-by-mode decomposition}
\label{app:modes}

For each $\xi\neq 0$ with $\rho_\xi:=\sqrt{\sigma_\xi(\gamma)}>0$, the Fourier mode satisfies
\[
\frac{d}{dt}
\begin{pmatrix}\widehat w_\xi(t)\\\widehat\mu_\xi(t)\end{pmatrix}
=
M_\xi
\begin{pmatrix}\widehat w_\xi(t)\\\widehat\mu_\xi(t)\end{pmatrix},
\qquad
M_\xi^2=\rho_\xi^2 I.
\]
Hence
\[
e^{tM_\xi}
=
\cosh(\rho_\xi t)\,I+\frac{\sinh(\rho_\xi t)}{\rho_\xi}M_\xi.
\]

\subsection{Left and right boundary layers}
\label{app:layers}

The left boundary condition $\widehat\mu_\xi(0)=\mu_0^\xi$ and the right boundary condition
$\widehat w_\xi(T)=g^\xi$ determine the two hyperbolic coefficients uniquely. Since
$\sinh(\rho_\xi T)\neq 0$ for $\rho_\xi>0$, the resulting solution has the standard boundary-layer form:
\[
|\widehat\mu_\xi(t)|+|\widehat w_\xi(t)|
\le
C\bigl(|\mu_0^\xi|e^{-\rho_\xi t}+|g^\xi|e^{-\rho_\xi(T-t)}\bigr),
\qquad t\in[0,T].
\]
This is the modewise origin of the turnpike estimate.

\subsection{Summation over modes}
\label{app:summation}

Squaring and summing over $\xi\neq 0$ with the $H^{-1}$ weight $(2\pi|\xi|)^{-2}$ gives
\begin{align*}
\|\mT(t)-1\|_{H^{-1}}^2
&=\sum_{\xi\neq 0}\frac{|\widehat\mu_\xi(t)|^2}{(2\pi|\xi|)^2}\\
&\le C\sum_{\xi\neq 0}\frac{1}{(2\pi|\xi|)^2}
\Bigl(|\mu_0^\xi|^2 e^{-2\rho_\xi t}+|g^\xi|^2e^{-2\rho_\xi(T-t)}\Bigr)\\
&\le C\Bigl(e^{-2\rho(\gamma) t}\|m_0-1\|_{H^{-1}}^2
+ e^{-2\rho(\gamma)(T-t)}\|g-\bar g\|_{H^{-1}}^2\Bigr).
\end{align*}
An analogous computation gives the $\|\grad\phiT(t)\|_{L^2}^2$ estimate using the $\widehat w_\xi$ components. This completes the linear turnpike estimate.

\subsection{The modes with $\Kh(\xi)=0$}
\label{app:zero-modes}

For modes with $\Kh(\xi)=0$, the matrix $M_\xi$ is lower triangular:
\[
M_\xi=
\begin{pmatrix}
\nu k_\xi & 0\\
-k_\xi & -\nu k_\xi
\end{pmatrix}.
\]
Its eigenvalues are still $\pm \nu k_\xi$, so these modes remain hyperbolic and do not affect the turnpike threshold. They are simply a special case of the general mode formula.

\section*{Acknowledgements}

The author is grateful to Dr. Jameson Graber and Dr. Alessio Poretta for helpful discussions and valuable feedback.

\section*{Conflict of Interest}

The author declares that there are no conflicts of interest regarding the publication of this paper. The author has no financial or personal relationships that could have inappropriately influenced the work reported in this manuscript.

\end{document}